\title[Maximal $L^2$-regularity in nonlinear gradient systems]{Maximal
  $L^2$-regularity in nonlinear gradient systems and perturbations of
  sublinear growth}
\author{Wolfgang Arendt} 
\address[Wolfgang Arendt]{Institute of Applied Analysis, University of
  Ulm, 89069 Ulm, Germany}
\email{\href{mailto:wolfgang.arendt@uuni-ulm.de}{\nolinkurl{wolfgang.arendt@uni-ulm.de}}}
\author{Daniel Hauer} 
\address[Daniel Hauer]{School of Mathematics and
  Statistics, The University of Sydney, NSW 2006, Australia}
\email{\href{mailto:daniel.hauer@sydney.edu.au}{\nolinkurl{daniel.hauer@sydney.edu.au}}}
\thanks{The second author is very grateful for the warm hospitality
  received during his visits at the University of Ulm.}
\subjclass[2010]{35K92, 35K58, 47H20, 47H10.}
\keywords{Nonlinear semigroups, subdifferential, Schaefer's fixed
  point theorem, existence, smoothing effect, perturbation, compact
  sublevel sets.}
\numberwithin{equation}{section}
\newtheorem{theorem}{Theorem}[section]
\newtheorem{proposition}[theorem]{Proposition}
\newtheorem{lemma}[theorem]{Lemma}
\theoremstyle{definition}
\newtheorem{definition}[theorem]{Definition}
\newtheorem{remark}[theorem]{Remark}
\newtheorem{example}[theorem]{Example}
\newcommand\R{{\mathbb{R}}}
\newcommand\N{\mathbb{N}}
\newcommand\T{\mathcal{T}}
\newcommand\He{\mathcal{H}}
\newcommand\dx{\mathrm{d}x }
\newcommand\ds{\mathrm{d}s }
\newcommand\dt{\mathrm{d}t }
\newcommand\td{\mathrm{d} }
\newcommand\abs[1]{\lvert#1\rvert}
\newcommand\norm[1]{\lVert#1\rVert}
\definecolor{darkred}{rgb}{0.7,0.1,0.1}
\begin{document}
\date{\today}
\maketitle

\tableofcontents

\begin{abstract}
  The nonlinear semigroup generated by the subdifferential of a convex
  lower semicontinuous function $\varphi$ has a smoothing effect,
  discovered by H. Br\'ezis, which implies maximal regularity for the
  evolution equation. We use this and Schaefer's fixed point theorem
  to solve the evolution equation perturbed by a Nemytskii-operator of
  sublinear growth. For this, we need that the sublevel sets of
  $\varphi$ are not only closed, but even compact. We apply our
  results to the $p$-Laplacian and also to the Dirichlet-to-Neumann
  operator with respect to $p$-harmonic functions.
\end{abstract}

%
%

\section{Introduction}
\label{sec:1}

Let $H$ be a real Hilbert space, $\varphi : H\to (-\infty,+\infty]$ a
proper, convex, lower semicontinuous function and 
$A=\partial\varphi$ the subdifferential of $\varphi$ (see
Section~\ref{sec:preliminary} for more details). Then $A$ is a
maximal monotone (in general, multi-valued) operator on $H$, for which
the following remarkable well-posedness result holds.

\begin{theorem}[{\bfseries Br\'ezis~\cite{MR0283635}}]\label{thm:Brezis-short} 
  Let $u_{0}\in H$ such that $\varphi(u_{0})$ is finite and $f\in
  L^{2}(0,T;H)$. Then, there exists a unique $u \in H^{1}(0,T;H)$ such that
  \begin{equation}
    \label{eq:1}
    \begin{cases}
      \dot{u}(t)+Au(t)\ni f(t) & \text{a.e. on $(0,T)$,}\\
      \hspace{39pt}u(0)=u_{0}.&
    \end{cases}
  \end{equation}
\end{theorem}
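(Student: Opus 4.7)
\medskip
\noindent\emph{Proof plan.}
The plan is to regularize $\varphi$ via its Moreau--Yosida approximation, solve the resulting Lipschitz ODE, derive $\lambda$-uniform energy estimates, and pass to the limit using monotonicity.

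For $\lambda>0$, introduce the Moreau--Yosida approximations
$\varphi_\lambda(x)=\inf_{y\in H}\bigl\{\tfrac{1}{2\lambda}\norm{x-y}^2+\varphi(y)\bigr\}$, which are convex and $C^{1}$ with gradient $A_\lambda=\nabla\varphi_\lambda$ coinciding with the Yosida approximation of $A$; in particular $A_\lambda$ is $\tfrac{1}{\lambda}$-Lipschitz and satisfies $A_\lambda x\in A J_\lambda x$ for the resolvent $J_\lambda=(I+\lambda A)^{-1}$. By Picard--Lindel\"of the Cauchy problem $\dot u_\lambda+A_\lambda u_\lambda=f$ on $(0,T)$ with $u_\lambda(0)=u_0$ has a unique solution $u_\lambda\in H^{1}(0,T;H)$.

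The chain rule for convex $C^1$ functions yields
$\tfrac{d}{dt}\varphi_\lambda(u_\lambda(t))=\langle A_\lambda u_\lambda(t),\dot u_\lambda(t)\rangle=\langle f(t),\dot u_\lambda(t)\rangle-\norm{\dot u_\lambda(t)}^2$. Integrating on $(0,T)$ and using Young's inequality together with $\varphi_\lambda(u_0)\leq\varphi(u_0)<\infty$ gives the uniform bound
\begin{equation*}
  \norm{\dot u_\lambda}_{L^2(0,T;H)}^2+\sup_{t\in[0,T]}\varphi_\lambda(u_\lambda(t))\leq C.
\end{equation*}
Hence $A_\lambda u_\lambda=f-\dot u_\lambda$ is bounded in $L^2(0,T;H)$, and since $J_\lambda u_\lambda=u_\lambda-\lambda A_\lambda u_\lambda$, the sequences $(u_\lambda)$ and $(J_\lambda u_\lambda)$ share the same limit if one of them converges.

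For the passage to the limit I would show $(u_\lambda)$ is Cauchy in $C([0,T];H)$. Expanding $\tfrac{1}{2}\tfrac{d}{dt}\norm{u_\lambda-u_\mu}^2=-\langle A_\lambda u_\lambda-A_\mu u_\mu,u_\lambda-u_\mu\rangle$ and writing $u_\lambda-u_\mu=(J_\lambda u_\lambda-J_\mu u_\mu)+(\lambda A_\lambda u_\lambda-\mu A_\mu u_\mu)$, the first bracket contributes a nonnegative term by monotonicity of $A$ applied to $A_\lambda u_\lambda\in AJ_\lambda u_\lambda$, while the remainder is controlled by $(\lambda+\mu)$ times the $L^2$-bound on $A_\lambda u_\lambda$, giving $\norm{u_\lambda-u_\mu}_{C([0,T];H)}^2\lesssim\lambda+\mu$. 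Hence $u_\lambda\to u$ in $C([0,T];H)$, $\dot u_\lambda\rightharpoonup\dot u$ and $A_\lambda u_\lambda\rightharpoonup f-\dot u$ weakly in $L^2(0,T;H)$, and $J_\lambda u_\lambda\to u$ strongly. The demiclosedness of the maximal monotone graph $A$ (lifted to $L^2(0,T;H)$) identifies the weak limit: $f(t)-\dot u(t)\in Au(t)$ a.e., and $u(0)=\lim u_\lambda(0)=u_0$.

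Uniqueness is immediate from monotonicity: two solutions satisfy $\tfrac{1}{2}\tfrac{d}{dt}\norm{u_1-u_2}^2=-\langle (f-\dot u_1)-(f-\dot u_2),u_1-u_2\rangle\leq 0$ and coincide at $t=0$. The main technical obstacle is the Cauchy-in-$\lambda$ estimate, which hinges on combining the inclusion $A_\lambda x\in AJ_\lambda x$ with the $L^2$-bound on $A_\lambda u_\lambda$; the assumption $\varphi(u_0)<\infty$ (rather than merely $u_0\in\overline{D(\varphi)}$) is essential, as it is precisely what makes $\dot u_\lambda$ bounded in $L^2$ up to $t=0$ and so gives the desired regularity $u\in H^1(0,T;H)$.
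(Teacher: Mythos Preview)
Your argument is correct and is in fact the classical route (essentially Br\'ezis' original proof): regularize the operator via the Moreau--Yosida approximation, derive the uniform energy bound from the chain rule for $\varphi_\lambda$, prove the Cauchy-in-$\lambda$ estimate using $A_\lambda x\in AJ_\lambda x$ together with monotonicity, and pass to the limit by demiclosedness.

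The paper, however, proceeds differently. Rather than regularizing $A$, it regularizes the \emph{data}: one picks $f_n\in H^{1}(0,T;H)$ with $f_n\to f$ in $\He$ and $u_n^{(0)}\in D(A)$ with $u_n^{(0)}\to u_0$ and $\varphi_\omega(u_n^{(0)})\le\varphi_\omega(u_0)$, and then invokes a black-box existence result for smooth data (Theorem~A.2, quoted from Barbu) to obtain $u_n\in W^{1,\infty}(0,T;H)$ solving the \emph{exact} inclusion $\dot u_n+Au_n\ni f_n$. The Cauchy property of $(u_n)$ in $C([0,T];H)$ then comes directly from the $\omega$-contraction estimate (Lemma~4.1), not from a Yosida-type computation; the bound on $\dot u_n$ in $\He$ is obtained, as in your proof, by multiplying by $\dot u_n$ and using the chain rule (Lemma~A.4), and the limit passage again uses maximal monotonicity of the lifted operator on $\He$.

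What each approach buys: your Yosida argument is more self-contained (it does not presuppose an auxiliary existence theorem for smooth data), and the $(\lambda+\mu)$-Cauchy estimate is a clean, purely monotonicity-based device. The paper's data-regularization approach is chosen because the paper actually proves the \emph{semiconvex} version (Theorem~2.2), where one must carry the shift $\omega$ through all estimates; moreover, since the approximants $u_n$ already solve the true inclusion, the quantitative inequalities \eqref{eq:13}--\eqref{eq:11} can be derived for $u_n$ and passed to the limit without having to correct for the discrepancy between $A$ and $A_\lambda$.
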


Our aim in this article is to study a perturbed version
of~\eqref{eq:1}. Let $\He$ denote the space $L^{2}(0,T;H)$, ($T>0$), and $G : \He\to
\He$ be a continuous mapping satisfying the \emph{sublinear} growth condition
\begin{equation}
  \label{eq:26}
  \norm{Gv(t)}_{H}\le L\,\norm{v(t)}_{H}+b(t)\qquad\text{a.e. on
    $(0,T)$ and for all $v\in \He$,}
\end{equation}
for some constants $L$, $b\in L^{2}(0,T)$ satisfying $b(t)\ge 0$ for
a.e. $t\in (0,T)$. Then we study the evolutionary problem
%
%
%
\begin{equation}
  \label{eq:2}
  \begin{cases}
    \dot{u}(t)+Au(t)\ni Gu(t) & \text{a.e. on $(0,T)$,}\\
    \hspace{39pt}u(0)=u_{0}. & 
  \end{cases}
\end{equation}

For that, we will use a compactness argument in form of Schaefer's
fixed point theorem (see Theorem~\ref{thm:1} in
Section~\ref{sec:preliminary}). Recall that lower semicontinuity of
$\varphi$ is equivalent to saying that the sublevel sets
$E_{c}:=\{u\in H\,\vert\,\varphi(u)\le c\}$, ($c\in \R$), are
closed. We will assume more, namely, compactness of sublevel sets
$E_{c}$. In fact, we need this assumption only for the shifted
function $\varphi_{\omega}$ given by
$\varphi_{\omega}(u)=\varphi(u)+\frac{\omega}{2}\norm{u}_{H}^{2}$
$(u\in H)$, which is important for applications. Then our main results
says the following.


\begin{theorem}
  \label{thm:main-short}
  Let $\varphi : H\to (-\infty,+\infty]$ be a proper function such
  that for some $\omega\ge 0$, $\varphi_{\omega}$ is convex and has
  compact sublevel sets. Let $A=\partial\varphi$ and $G : \He\to \He$
  be a continuous mapping satisfying~\eqref{eq:26}. Then for every
  $u_{0}\in H$ with $\varphi(u_{0})$ finite, there exists
  $u\in H^{1}(0,T;H)$ solving~\eqref{eq:2}.
\end{theorem}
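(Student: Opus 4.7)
The plan is to recast \eqref{eq:2} as a fixed-point equation and apply Schaefer's theorem (Theorem~\ref{thm:1}). For each $v \in \He$, the sublinear bound \eqref{eq:26} places $Gv$ in $\He$; rewriting the inclusion in the convex-potential form $\dot u + \partial\varphi_\omega(u) \ni Gv + \omega u$ and handling $\omega u$ as a Lipschitz perturbation of the inhomogeneity, Theorem~\ref{thm:Brezis-short} yields a unique $u \in H^{1}(0,T;H)$ with $u(0) = u_0$. This defines a solution operator $S \colon \He \to \He$ whose fixed points are exactly the solutions of \eqref{eq:2}.

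Continuity of $S$ follows from the $L^{2}$-continuous dependence of the Brezis problem on its inhomogeneity combined with continuity of $G$. To produce the a priori bound required by Schaefer's theorem, I would consider $u = \lambda S(u)$ with $\lambda \in (0,1]$ and set $w := u/\lambda$; then $w$ solves $\dot w + A w \ni G(\lambda w)$, $w(0) = u_{0}$. A standard energy estimate exploiting the monotonicity of $\partial\varphi_\omega$, the sublinear bound \eqref{eq:26}, and the crucial fact that $L\lambda \leq L$ yields, via Gronwall, a bound on $\sup_{t} \|w(t)\|_{H}$ independent of $\lambda$, whence $\|u\|_{\He}$ is uniformly bounded.

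The pivotal step, and the one that forces the compact-sublevel-sets hypothesis, is the compactness of $S$. For a bounded sequence $v_n \in \He$ and $u_n = S(v_n)$, the Brezis energy identity applied to the equation for $u_n$ produces
\[
\tfrac{1}{2}\int_0^T \|\dot u_n(t)\|_H^2\,dt + \varphi_\omega(u_n(T)) \leq \varphi_\omega(u_0) + C\bigl(1 + \|Gv_n\|_\He^2\bigr),
\]
and, once one observes that $\|u_n(t)\|_H \leq \|u_0\|_H + T^{1/2}\|\dot u_n\|_{L^2(0,T;H)}$, it bounds $\sup_{t \in [0,T]} \varphi_\omega(u_n(t))$ uniformly in $n$. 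Every $u_n(t)$ therefore lies in a single compact sublevel set of $\varphi_\omega$, while $\|u_n(t) - u_n(s)\|_H \leq |t-s|^{1/2}\|\dot u_n\|_{L^2(0,T;H)}$ yields equicontinuity. Arzel\`a--Ascoli then produces a subsequence convergent in $C([0,T];H)$, and hence in $\He$, establishing compactness of $S$.

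With $S$ continuous, compact, and satisfying the a priori bound, Schaefer's theorem furnishes a fixed point, which is the required solution of \eqref{eq:2}. I expect the main difficulty to lie in the compactness step: one must secure \emph{simultaneous} control of $\|\dot u_n\|_{L^{2}(0,T;H)}$ and $\sup_{t}\varphi_\omega(u_n(t))$, and this information upgrades to compactness in $\He$ only because sublevel sets of $\varphi_\omega$ are assumed compact rather than merely closed.
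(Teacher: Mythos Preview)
Your proposal is correct and follows the paper's overall architecture: cast~\eqref{eq:2} as a fixed-point problem for the solution operator, establish the Schaefer a~priori bound by exactly the energy/Gronwall computation the paper uses, and obtain compactness from equicontinuity together with the compact-sublevel-set hypothesis. The one substantive difference lies in the compactness step. Because you restrict to $u_0\in D(\varphi)$ (the hypothesis of Theorem~\ref{thm:main-short}), the Br\'ezis energy identity gives a uniform bound on $\|\dot u_n\|_{L^2(0,T;H)}$ \emph{and} on $\sup_{t\in[0,T]}\varphi_\omega(u_n(t))$ simultaneously, so a single Arzel\`a--Ascoli argument on $C([0,T];H)$ suffices. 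The paper instead proves the stronger Theorem~\ref{thm:3} for $u_0\in\overline{D(\varphi)}$, where only $u_n\in H^1_{\mathrm{loc}}((0,T];H)$ and the weighted estimate~\eqref{eq:10} on $t\,\varphi(u_n(t))$ are available; this forces a Cantor diagonalisation over a countable dense subset of $(0,T]$, equicontinuity on each $[\delta,T]$, and dominated convergence to reach $\He$. Your argument is cleaner for the statement as written; the paper's buys the extension to initial data in the closure.

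One minor point: to invoke Theorem~\ref{thm:Brezis-short} (which assumes $\varphi$ convex) after rewriting the equation as $\dot u+\partial\varphi_\omega(u)\ni Gv+\omega u$, you still have $u$ on the right-hand side, so a short Picard/contraction argument is needed to close the well-posedness of $S$. The paper sidesteps this by proving and using the semiconvex version (Theorem~\ref{thm:2}) directly.
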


We show in Example~\ref{ex:2} that the solution is not unique in
general. The proof of Theorem~\ref{thm:main-short} is based on
Br\'ezis' Theorem~\ref{thm:Brezis-short}. However, we need it under
the hypothesis that merely $\varphi_{\omega}$ is convex. We give a
proof of this more general result (see Theorem~\ref{thm:2}) in the appendix of this
paper. Theorem~\ref{thm:main-short} remains also true if $u_{0}\in
\overline{D(\varphi)}$ where $D(\varphi):=\{u\in
H\,\vert\,\varphi(u)<+\infty\}$; however, the solution of~\eqref{eq:2} is merely in
$H^{1}_{loc}((0,T];H)$ in that case.

As application, we consider $H=L^{2}(\Omega)$ and $G$ a Nemytskii
operator. The operator $A$ may be the $p$-Laplacian ($1\le p<+\infty$)
with possibly lower order terms and equipped with some boundary
conditions (Dirichlet, Neumann, or Robin, see~\cite{CoulHau2017}) or
a $p$-version of the Dirichlet-to-Neumann operator considered recently
in~\cite{MR3369257} and via the abstract theory of 
$j$-elliptic functions (see~\cite{MR2823661,MR2881534} and~\cite{MR3465809}).

%
%
%
%

\section{Preliminaries}
\label{sec:preliminary}
In this section, we define the precise setting used throughout this
paper and explain our mains tools: Brezis' result for semiconvex functions and Schaefer's fixed
point theorem.\medskip

We begin by recalling that a mapping $\T$ defined on a Banach space $X$
is called \emph{compact} if $\T$ maps bounded sets in into relatively
compact sets.

\begin{theorem}[{\cite{MR0342978}, {\bfseries Schaefer's fixed point
      theorem}}]
  \label{thm:1}
  Let $X$ be a Banach space and $\T : X \to X$ be continuous and
  compact. Assume that the ``Schaefer set''
  \begin{displaymath}
    \mathcal{S}:=\Big\{u\in X\,\Big\vert\,\text{there exists
    }\lambda\in [0,1]\text{ s.t. }u=\lambda \T{}u \Big\}
  \end{displaymath}
  is bounded in $X$. Then $\T$ has a fixed point.
\end{theorem}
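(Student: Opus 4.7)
The plan is to reduce the statement to Schauder's fixed point theorem by composing $\T$ with a radial retraction onto a sufficiently large ball. Since $\mathcal{S}$ is bounded, I fix $R>0$ so that $\norm{u}_X<R$ for every $u\in\mathcal{S}$; in particular the origin (which lies in $\mathcal{S}$ via $\lambda=0$) is an interior point of the closed ball $B_R:=\{x\in X:\norm{x}_X\le R\}$. I then introduce the radial retraction $r:X\to B_R$ defined by $r(x)=x$ if $\norm{x}_X\le R$ and $r(x)=Rx/\norm{x}_X$ otherwise. This $r$ is continuous and $\norm{r(x)}_X\le R$ for all $x\in X$.

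Next I consider the auxiliary map $\widetilde{\T}:=r\circ \T : B_R\to B_R$. It is continuous as the composition of continuous maps, and it is compact because $\T$ maps the bounded set $B_R$ into a relatively compact set and $r$ is continuous, so $\widetilde{\T}(B_R)$ is relatively compact. Since $B_R$ is a nonempty, closed, bounded, convex subset of the Banach space $X$ and $\widetilde{\T}(B_R)$ is relatively compact, Schauder's fixed point theorem yields some $u\in B_R$ with $u=\widetilde{\T}u = r(\T u)$.

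It remains to promote $u$ to a fixed point of $\T$ itself, and this is the only non-routine part. I split into two cases according to whether $\norm{\T u}_X\le R$ or $\norm{\T u}_X>R$. In the first case $r(\T u)=\T u$, and so $u=\T u$ as desired. In the second case $r(\T u)=\lambda \T u$ with $\lambda:=R/\norm{\T u}_X\in(0,1)$, which places $u$ in the Schaefer set $\mathcal{S}$; then the choice of $R$ forces $\norm{u}_X<R$, while on the other hand $\norm{u}_X=\norm{r(\T u)}_X=R$, a contradiction. Hence only the first case can occur and $u$ is a fixed point of $\T$.

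The main obstacle to watch is the strictness needed in the choice of $R$: if one only guarantees $\norm{u}_X\le R$ for $u\in\mathcal{S}$, the contradiction in the second case collapses. The fix is simply to enlarge $R$ slightly, using that $\mathcal{S}$ being bounded gives a finite $\sup_{u\in\mathcal{S}}\norm{u}_X$, and to take any strictly larger $R$. Everything else is bookkeeping: continuity of the radial retraction, and appealing to Schauder's theorem on the closed convex set $B_R$.
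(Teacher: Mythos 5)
Your proof is correct. The paper does not actually prove Theorem~\ref{thm:1}; it cites Schaefer's monograph and points to the ``elegant proof'' in Evans, and your argument---truncating $\T$ by the radial retraction onto a closed ball $B_R$ with $R$ strictly larger than $\sup_{u\in\mathcal{S}}\norm{u}_X$, applying Schauder on $B_R$, and ruling out the case $\norm{\T u}_X>R$ because it would place $u$ in $\mathcal{S}$ while forcing $\norm{u}_X=R$---is exactly that standard proof, including the correct attention to the strict inequality in the choice of $R$.
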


This result is a special case of \emph{Leray-Schauder}'s fixed point
theorem, but Schaefer gave a most elegant proof (cf~\cite{MR2597943}),
which also is valid in locally compact spaces.\medskip

Given a function $\varphi : H \to (-\infty,+\infty]$, we call the set
$D(\varphi):=\{u\in H\,\vert\,\varphi(u)<+\infty\}$ the
\emph{effective domain} of $\varphi$, and $\varphi$ is said to be
\emph{proper} if $D(\varphi)$ is non-empty. Further, we say that
$\varphi$ is \emph{lower semicontinuous} if
for every $c\in \R$, the sublevel set
\begin{displaymath}
  E_{c}:=\Big\{u\in D(\varphi)\,\Big\vert\;\varphi(u)\le c\Big\}
\end{displaymath}
is closed in $H$, and $\varphi$ is \emph{semiconvex} if there exists
an $\omega\in \R$ such that the shifted function
$\varphi_{\omega} : H\to (-\infty,+\infty]$ defined by
\begin{displaymath}
\varphi_{\omega}(u):=\varphi(u)+\frac{\omega}{2}\norm{u}_{H}^{2},\qquad
(u\in H),
\end{displaymath}
is convex. Then, $\varphi_{\hat{\omega}}$ is convex for all
$\hat{w}\ge \omega$, and $\varphi_{\omega}$ is lower semicontinuous if
and only if $\varphi$ is lower semicontinuous.\medskip 

Given a function $\varphi : H\to (-\infty,+\infty]$, its \emph{subdifferential}
$A=\partial\varphi$ is defined by
\begin{displaymath}
  \partial\varphi=\Big\{(u,h)\in H\times
  H\,\Big\vert\,\liminf_{t\downarrow
    0}\frac{\varphi(u+tv)-\varphi(u)}{t}
  \ge (h, v)_{H}\,\forall\, v\in D(\varphi)\Big\},
\end{displaymath}
which, if $\varphi_{\omega}$ is convex, reduces to
\begin{displaymath}
  \partial\varphi=\Big\{(u,h)\in H\times
  H\,\Big\vert\,\varphi_{\omega}(u+v)-\varphi_{\omega}(u)\ge (h+\omega
  u, v)_{H}\,\forall\, v\in D(\varphi)\Big\}.
\end{displaymath}
It is standard to identify a (possibly multi-valued) operator $A$ on $H$
with its graph and for every $u\in H$, one sets
\begin{math}
   Au:=\{v\in H \,\vert\, (u,v)\in A\}
\end{math}   
and calls $D(A):=\{u\in H \,\vert\, Au\neq \emptyset\}$ the
\emph{domain of $A$} and
$\textrm{Rg}(A):=\bigcup_{u\in D(A)}\! Au$ the \emph{range of
  $A$}.\medskip

Now, suppose $\varphi : H\to (-\infty,+\infty]$ is proper, lower
semicontinuous, and semiconvex; more precisely, let's fix $\omega\in
\R$ such that $\varphi_{\omega}$ is convex. Then, under those
hypotheses on $\varphi$, Br\'ezis' well-posedness result
(Theorem~\ref{thm:Brezis-short}) remains true.

\begin{remark}[{\bfseries Maximal $L^2$-regularity}]
  If $u_{0}\in H$ such that $\varphi(u_{0})$ is finite, then
  Theorem~\ref{thm:Brezis-short} says that for every $f\in L^{2}(0,T;H)$, the unique solution
  $u$ of~\eqref{eq:1} has its time derivative $\dot{u}\in
  L^{2}(0,T;H)$ and hence by the differential inclusion
  \begin{equation}
    \label{eq:8}
    \dot{u}(t)+Au(t)\ni f(t)\qquad\text{a.e. on $(0,T)$,}
  \end{equation}
  also $Au\in L^{2}(0,T;H)$. In other words, for $f\in
  L^{2}(0,T;H)$, $\dot{u}$ and $Au\in L^{2}(0,T;H)$ admit the maximal
  possible regularity. 
  For this reason, we call this property \emph{maximal
    $L^{2}$-regularity}, as it is customary for generators of
  holomorphic semigroups on Hilbert spaces (see~\cite{MR2103696} for
  a survey on this subject). 
\end{remark}

As before, we fix $T>0$, denote by $\He$ the space $L^{2}(0,T;H)$, and write
$\norm{\cdot}_{\He}$ for the norm $\norm{\cdot}_{L^{2}(0,T;H)}$.\medskip

Further, after possibly replacing $\varphi$
by a translation, we may always assume without loss of generality that
$0\in D(\partial\varphi_{\omega})$ and $\varphi_{\omega}$ attains a
minimum at $0$ with $\varphi_{\omega}(0)=0$ (for further details
see~\cite[p159]{MR2582280} or the appendix of this paper). 
By the convexity of $\varphi_{\omega}$, this implies that 
$(0,0)\in \omega I_{H}+A$, that is,
\begin{equation}
  \label{eq:3}
  (h+\omega u,u)_{H}\ge 0\qquad\text{for all $(u,h)\in A$.}
\end{equation}

With this assumption in mind, we now state Br\'ezis' $L^2$-maximal
regularity theorem for semiconvex functions. 


\begin{theorem}[{{\bfseries Br\'ezis' $L^2$-maximal regularity for
      semiconvex $\varphi$}}]
  \label{thm:2}
  Let $u_{0}\in \overline{D(\varphi)}$ and $f\in \He$. Then, there exists a
  unique $u\in H^{1}_{loc}((0,T];H)\cap C([0,T];H)$ satisfying
  \begin{equation}
    \label{eq:5}
  \begin{cases}
    \dot{u}(t)+Au(t)\ni f(t) & \text{a.e. on $(0,T)$,}\\
    \hspace{39pt}u(0)=u_{0}.& 
  \end{cases}
\end{equation}
Moreover, one has that $\varphi\circ u\in W^{1,1}_{loc}((0,T])\cap L^{1}(0,T)$,
\begin{align}
  \label{eq:13}
  \norm{u(t)}_{H}&\le \left(
  \norm{u_{0}}_{H}^{2}+\int_{0}^{T}\norm{f(s)}_{H}^{2}\,\ds\right)^{\frac{1}{2}}\,e^{\frac{1+2\omega}{2} t}\;\text{
   for every $t\in (0,T]$,}\\ \label{eq:21}
  \int_{0}^{T}\varphi(u(s))\,\ds&\le
    \tfrac{1}{2}\norm{f}^{2}_{\He}+\tfrac{1+\omega}{2}
    \norm{u}^{2}_{\He} +\tfrac{1}{2}\norm{u_{0}}^{2}_{H},\\
  \label{eq:10}
   t\varphi(u(t))&\le
   \int_{0}^{T}\varphi(u(s))\,\ds+\tfrac{1}{2}\norm{\sqrt{\cdot}f}^{2}_{\He}\quad\text{
   for every $t\in (0,T]$,}\\
  \label{eq:11}
   \norm{\sqrt{\cdot}\dot{u}}^{2}_{\He}&\le 2
                                              \int_{0}^{T}\varphi(u(t))\,\dt+
                                              \norm{\sqrt{\cdot}f}^{2}_{\He}.
\end{align}
Finally, if $u_{0}\in D(\varphi)$, then $u\in H^{1}(0,T;H)$.
\end{theorem}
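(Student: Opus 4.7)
The plan is to reduce the semiconvex case to the classical convex Br\'ezis theorem (Theorem~\ref{thm:Brezis-short}) via a fixed-point coupling. Since $\varphi_\omega$ is proper, convex, and lower semicontinuous and $\partial\varphi = \partial\varphi_\omega - \omega I$, the inclusion in~\eqref{eq:5} is equivalent to
\begin{equation*}
\dot{u}(t) + \partial\varphi_\omega(u(t)) \ni f(t) + \omega u(t),\qquad u(0)=u_0.
\end{equation*}
Let $S_\omega : \He \to C([0,T];H)$ denote the solution operator $g\mapsto w$ provided by Theorem~\ref{thm:Brezis-short} applied to the convex function $\varphi_\omega$ (for a fixed $u_0 \in D(\varphi) = D(\varphi_\omega)$). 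Then a solution of~\eqref{eq:5} is exactly a fixed point of $\Phi : v\mapsto S_\omega(f+\omega v)$.

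Assuming first $u_0\in D(\varphi)$, I would produce this fixed point by a Picard-type contraction. For $v_1,v_2\in\He$ and $u_i=\Phi(v_i)\in H^1(0,T;H)$, monotonicity of $\partial\varphi_\omega$ gives $\tfrac{1}{2}\tfrac{d}{dt}\norm{u_1-u_2}_H^2 \le \omega\,(v_1-v_2,u_1-u_2)_H$, hence $\norm{u_1(t)-u_2(t)}_H \le \omega\int_0^t\norm{v_1(s)-v_2(s)}_H\,\ds$. Iterating yields $\norm{\Phi^n(v_1)-\Phi^n(v_2)}_{C([0,T];H)} \le \tfrac{(\omega T)^n}{n!}\,\norm{v_1-v_2}_{C([0,T];H)}$, so by the Weissinger extension of Banach's fixed-point theorem $\Phi$ admits a unique fixed point $u\in H^1(0,T;H)$ that solves~\eqref{eq:5}. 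The same monotonicity argument applied to any two solutions delivers uniqueness in $C([0,T];H)\cap H^1_{loc}((0,T];H)$.

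The estimates~\eqref{eq:13}--\eqref{eq:11} then follow from standard subgradient manipulations on the convex $\varphi_\omega$. Pairing the inclusion with $u$ and invoking~\eqref{eq:3} gives $\tfrac{1}{2}\tfrac{d}{dt}\norm{u}_H^2 \le \omega\norm{u}_H^2 + (f,u)_H$, from which Young and Gronwall deliver~\eqref{eq:13}. For~\eqref{eq:21}, the subgradient inequality of $\varphi_\omega$ at $0$, together with $\varphi_\omega(0)=0$, yields pointwise $\varphi_\omega(u) \le (f-\dot u+\omega u,u)_H$; integrating over $(0,T)$, applying Young, and converting $\varphi_\omega$ back to $\varphi$ via $\varphi_\omega=\varphi+\tfrac{\omega}{2}\norm{\cdot}_H^2$ produces~\eqref{eq:21}. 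Estimates~\eqref{eq:10} and~\eqref{eq:11} rely on the chain rule $\tfrac{d}{ds}\varphi_\omega(u(s)) = (\dot u,f-\dot u+\omega u)_H$, valid by Br\'ezis' convex theory for $\varphi_\omega$; multiplying by $s$, integrating by parts over $(0,t)$, and dropping non-negative terms yields both inequalities once rewritten in terms of $\varphi$.

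Finally, for $u_0\in\overline{D(\varphi)}$, pick $u_0^n\in D(\varphi)$ with $u_0^n\to u_0$ and let $u^n$ be the solutions just constructed. The contractivity estimate (applied to two such $u^n$ with common source $f$) shows that $(u^n)$ is Cauchy in $C([0,T];H)$ with limit $u$, and the smoothing bounds~\eqref{eq:10}--\eqref{eq:11}, whose right-hand sides are independent of $\varphi(u_0^n)$, furnish uniform $H^1((\varepsilon,T);H)$ bounds for every $\varepsilon>0$. Combined with the demiclosedness of $\partial\varphi_\omega$ (a consequence of the lower semicontinuity of $\varphi_\omega$), this allows passage to the limit in the inclusion on each $[\varepsilon,T]$, yielding $u\in C([0,T];H)\cap H^1_{loc}((0,T];H)$ as required. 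The most delicate step throughout is the chain rule for $\varphi\circ u$ in the semiconvex setting, which I would import from the convex theory applied to $\varphi_\omega$ and then translate back.
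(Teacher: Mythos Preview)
Your proof is correct and takes a genuinely different route from the paper. The paper proceeds by \emph{approximation of the data}: it invokes the existence theory for quasi-maximal monotone operators with $W^{1,1}$ source (Theorem~\ref{thm:A1}), picks $f_n\in H^1(0,T;H)$ with $f_n\to f$ in $\He$ and $u_n^{(0)}\in D(A)$ with $\varphi_\omega(u_n^{(0)})\le\varphi_\omega(u_0)$, derives the a~priori estimates~\eqref{eq:13}--\eqref{eq:11} along this sequence, and passes to the limit via the closedness of the lifted operator $\mathcal{A}$ in $\He\times\He_w$. You instead perform a \emph{reduction to the convex case}: rewriting the inclusion as $\dot u+\partial\varphi_\omega(u)\ni f+\omega u$ and running a Weissinger contraction on $v\mapsto S_\omega(f+\omega v)$ lets you treat the convex Br\'ezis theorem as a black box, so neither the smoothing of $f$ nor the auxiliary Theorem~\ref{thm:A1} is needed. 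The price is that you must import the chain rule (Lemma~\ref{lem:A1}) and the $H^1$-regularity from the convex theory rather than deriving them in the course of the approximation; the paper's route, by contrast, yields these directly at the approximate level and is in that sense more self-contained. For the step $u_0\in\overline{D(\varphi)}$ both arguments coincide: approximate $u_0$ by $u_0^n\in D(\varphi)$, use Lemma~\ref{lem:1} for convergence in $C([0,T];H)$, and exploit that the right-hand sides of~\eqref{eq:21}, \eqref{eq:10}, \eqref{eq:11} depend only on $\norm{u_0}_H$ and $\norm{f}_\He$, not on $\varphi(u_0^n)$.
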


To keep this paper self-contained, we provide a  proof of this result
in the appendix of this paper.\medskip

\begin{definition}
  Given $f\in \He$ and $u_{0}\in H$, we call a function
  $u : [0,T]\to H$ a \emph{(strong) solution} of~\eqref{eq:5}
  (respectively, of~\eqref{eq:1}) if
  $u\in H^{1}_{loc}((0,T];H)\cap C([0,T];H)$, $u(0)=u_{0}$, and for
  a.e. $t\in (0,T)$, one has that $u(t)\in D(A)$ and
  $f(t)-\dot{u}(t)\in Au(t)$.
\end{definition}

For illustrating the theory developed in this paper, we consider the following
standard example: the \emph{Dirichlet $p$-Laplacian} perturbed by a lower order term.

\begin{example}\label{ex:1}
  Let $\Omega$ be an open subset of $\R^{d}$, ($d\ge 1$),
  $H=L^{2}(\Omega)$, and for $\frac{2d}{d+2}\le p<\infty$, let
  $V=W^{1,p}_{0}(\Omega)$ be the closure of $C_{c}^{1}(\Omega)$ with
  respect to the norm
  $\norm{u}_{V}:=\norm{\nabla u}_{L^{p}(\Omega;\R^{d})}$. Then, one
  has that $V$ is continuously embedded into $H$
  (cf~\cite[Theorem~9.16]{MR2759829}); we write for this $V
  \hookrightarrow H$. 

  Further, let $f=\beta+f_{1}$ be
  the sum of a maximal monotone graph $\beta$ of
  $\R$ satisfying $(0,0)\in \beta$ and a \emph{Lipschitz-Carath\'eodory function}
  $f_{1} : \Omega\times \R\to \R$ satisfying $f(x,0)=0$; that is, for a.e.
  $x\in \Omega$, $f_{1}(x,\cdot)$ be Lipschitz continuous (with
  constant $\omega>0$) uniformly for a.e. $x\in \Omega$, and
  $f_{1}(\cdot,u)$ is measurable on $\Omega$ for every $u\in
  \R$. Then, there is a proper, convex and lower semicontinuous function
  $j:\R\to (-\infty,+\infty]$ satisfying $j(0)=0$ and $\partial
  j=\beta$ in $\R$ (see~\cite[Example 1., p53]{MR2582280}). We set
  \begin{align}\notag
    F(u)&=\phi(u)+\int_{\Omega}F_{1}(u(x))\,dx\qquad\text{ for
          every $u\in H$, where}\\ \label{eq:22}
    \phi(u)&=
          \begin{cases}
            \displaystyle\int_{\Omega} j(u(x))\,\dx &\qquad\text{if $j(u)\in
              L^{1}(\Omega)$,}\\[7pt]
            +\infty &\qquad\text{if otherwise, and}
          \end{cases}\\ \notag
    F_{1}(u) &=\int_{0}^{u(x)}f_{1}(\cdot,s)\,\ds
  \end{align}
  for every $u\in L^{2}(\Omega)$. Now, let $\varphi_{1} : H \to (-\infty,+\infty]$ be given by
  \begin{displaymath}
    \varphi_{1}(u)=
    \begin{cases}
      \displaystyle\tfrac{1}{p}\int_{\Omega}\abs{\nabla u}^{p}\,\dx+\int_{\Omega}F_{1}(u)\,\dx &
      \text{if $u\in V$,}\\[7pt]
      +\infty & \text{if $u\in H\setminus V$}
    \end{cases}
  \end{displaymath}
  for every $u\in H$. Then the domain $D(\varphi_{1})$ of
  $\varphi_{1}$ is $V$. The function $\varphi_{1}$ is lower semicontinuous on $H$,
  proper, $\varphi_{1\!,\omega}$ is convex, and for every $u\in
  V$, $\varphi_{1}$ is G\^ateaux-differentiable with
  \begin{displaymath}
    D_{v}\varphi(u)=\lim_{t\to0+}\frac{\varphi(u+t)-\varphi(u)}{t}=\int_{\Omega}\abs{\nabla
    u}^{p-2}\nabla u\nabla v+f_{1}(x,u)\,v\dx
  \end{displaymath}
  for every $v\in V$. Since $V$ is dense in $H$, the operator
  $\partial\varphi_{1}$ is a single-valued operator on $H$ with domain
  \begin{align*}
    D(\partial\varphi_{1})&=\Big\{u\in V\,\Big\vert\,\exists\,h\in
    L^{2}(\Omega)\,D_{\nu}\varphi(u)=\int_{\Omega}h v\,\dx\,\forall\,v\in
    V\Big\},\text{ and}\\[7pt]
        \partial\varphi_{1}(u) &=h=-\Delta_{p}u+f_{1}(x,u)\qquad\text{in $\mathcal{D}'(\Omega)$.}
  \end{align*}
  The operator $\partial\varphi_{1}$ is the negative \emph{Dirichlet
    $p$-Laplacian $-\Delta_{p}^{\! D}$ on $\Omega$ with a Lipschitz continuous lower order
    term} $f_{1}$. Next, we add the function $\phi$ given by~\eqref{eq:22}
  to the $\varphi_{1}$. For this, note that $\phi$ is proper (since
  for $u_{0}\equiv 0$, $\phi(u_{0})=0$) with
  $\textrm{int}(D(\phi))\neq \emptyset$, convex (since $j$ is convex),
  and lower semicontinuous on $H$. Thus, the function
  $\varphi : H\to (-\infty,+\infty]$ given by
  \begin{equation}
    \label{eq:23}
    \varphi(u)=\varphi_{1}(u)+\phi(u)\qquad\text{for every $u\in H$,}
  \end{equation}
  is convex, lower semicontinuous, and proper with domain
  $D(\varphi)=\{u\in V\,\vert\,j(u)\in L^{1}(\Omega)\}$ and 
  the operator $A=\partial\varphi$ is given by
  \begin{align*}
    D(A)&=\Big\{u\in D(\varphi)\,\Big\vert\,\exists\,h\in
    L^{2}(\Omega)\,D_{\nu}\varphi(u)=\int_{\Omega}h v\,\dx\,\forall\,v\in
    D(\varphi)\Big\},\\
          Au&=h=-\Delta_{p}u+\beta(u)+f_{1}(x,u),
  \end{align*}
  and $A$ is single-valued provided $D(\varphi)$ is dense in
  $L^{2}(\Omega)$. Here, we note that
  \begin{displaymath}
    \overline{D(A)}=\overline{D(\varphi)}=\Big\{u\in H\,\Big\vert\,j(u(x))\in
    \overline{D(\beta)}\textrm{ for a.e. $x\in \Omega$}\Big\}.
  \end{displaymath}
  Due to Theorem~\ref{thm:2}, for every $u_{0}\in
  \overline{D(\varphi)}$ and $f\in \He$, there is a unique
  solution $u\in H^{1}_{loc}((0,T];H)\cap C([0,T];H)$ of the parabolic
  boundary-value problem
  \begin{displaymath}
    \begin{cases}
      \partial_{t}u(t)
      -\Delta_{p}u(t)+\beta(u(t))+f_{1}(\cdot,u(t))\ni f(t) & \text{on
      $(0,T)\times \Omega$,}\\
    \hspace{1,3cm}\phantom{-\Delta_{p}u(t)+\beta(u(t))+f_{1}(\cdot)}u(t)=0 & \text{on
      $(0,T)\times \partial\Omega$,}\\
    \hspace{1,25cm}\phantom{-\Delta_{p}^{\! D}u(t)+\beta(u(t))+f_{1}(\cdot)}u(0)=u_{0} &\text{on
      $\Omega$.}
    \end{cases}
  \end{displaymath}
  Here, we write $\partial_{t}u(t)$ instead of $\dot{u}(t)$ since we
  rewrote the abstract Cauchy problem~\eqref{eq:5} as an explicit parabolic
  partial differential equation. 
\end{example}

%
%

\section{Main result}
\label{sec:main-results}

Throughout this section, let $\varphi : H\to (-\infty,+\infty]$ be a
proper function. We assume that there is an $\omega\in \R$ such that
$\varphi_{\omega}$ is convex and the sublevel set
\begin{equation}
  \label{eq:4}
  E_{\omega\!,c}:=\Big\{u\in D(\varphi)\,\vert \varphi_{\omega}(u)\le
    c\Big\}\quad\text{is compact in $H$ for every $c\in \R$.}
\end{equation}
\begin{flushright} 
  $\mbox{}_{\Box}$
\end{flushright}

\begin{remark}
  We emphasize that condition~\eqref{eq:4} does not imply that
  $\varphi$ has compact sublevel sets. This becomes more clear if one
  considers as $\varphi$ the function associated with the negative \emph{Neumann
    $p$-Laplacian} $-\Delta_{p}^{\!\! N}$ on a
  boun\-ded, open subset $\Omega$ of $\R^{d}$ with a Lipschitz boundary
  $\partial\Omega$. For $\max\{1,\frac{2d}{d+2}\}<p<\infty$, ($d\ge
  1$), let $V=W^{1,p}(\Omega)$, $H=L^{2}(\Omega)$,
  $\He=L^{2}(0,T;L^{2}(\Omega))=L^{2}((0,T)\times \Omega)$, and
  $\varphi : H\to (-\infty,+\infty]$ be given by  
  \begin{equation}
    \label{eq:24}
    \varphi(u):=
    \begin{cases}
      \tfrac{1}{p}\displaystyle\int_{\Omega}\abs{\nabla u}^{p}\dx & \text{if $u\in
        V$,}\\[7pt]
      +\infty & \text{if $u\in H\setminus V$}
    \end{cases}
\end{equation}
 for every $u\in H$. Then, for every $c>0$, the sublevel set
  $E_{0,c}$ of $\varphi$ contains the sequence
  $(u_{n})_{n\ge 0}$ of constant functions $u_{n}\equiv n$, which does
  not admit any convergent subsequence in $H$. On the
  other hand, for every $\omega>0$ and $c>0$, the sublevel set
  $E_{\omega\!,c}$ is a bounded set in $V$ and by Rellich-Kandrachov's
  compactness, one has that $V\hookrightarrow H$ by a compact
  embedding. Thus, for every $\omega>0$ and $c>0$, the sublevel set
  $E_{\omega\!,c}$ is compact in $L^{2}(\Omega)$.
\end{remark}

Let $G : \He\to \He$ be a continuous function with \emph{sublinear
growth}; that is, there are $L\ge 0$ and $b\in L^{2}(0,T)$ satisfying
$b(t)\ge 0$ a.e. on $(0,T)$ such that
\begin{equation}
  \tag{\ref{eq:26}}
  \norm{Gv(t)}_{H}\le L\,\norm{v(t)}_{H}+b(t)\qquad\text{a.e. on
    $(0,T)$, for all $v\in \He$.}
\end{equation}
Here we let $Gv(t):=(Gv)(t)$ to use less heavy notation. 
Then, our main result of this paper reads as follows.

\begin{theorem}
  \label{thm:3}
  Let $u_{0}\in \overline{D(\varphi)}$ and $f\in \He$. Then, there
  exists a solution $u\in H^{1}_{loc}((0,T];H)\cap C([0,T];H)$ of
  \begin{equation}
    \label{eq:6}
  \begin{cases}
    \dot{u}(t)+Au(t)\ni Gu(t) & \text{a.e. on $(0,T)$,}\\
    \hspace{39pt}u(0)=u_{0}.& 
  \end{cases}
\end{equation}
In particular, if $u_{0}\in D(\varphi)$, then problem~\eqref{eq:6} has
a solution $u\in H^{1}(0,T;H)$.
\end{theorem}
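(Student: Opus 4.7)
My plan is to apply Schaefer's fixed point theorem (Theorem~\ref{thm:1}) on $\He$ to the operator $\T : \He \to \He$ defined by $\T v := u$, where $u$ is the unique solution in $H^{1}_{loc}((0,T];H)\cap C([0,T];H)$ of $\dot{u}+Au \ni Gv$, $u(0)=u_0$, supplied by Theorem~\ref{thm:2} applied to the source term $Gv\in\He$. A fixed point $u=\T u$ is exactly a solution of~\eqref{eq:6}, and the final assertion of Theorem~\ref{thm:2} gives the $H^{1}(0,T;H)$-conclusion when $u_0\in D(\varphi)$. So it remains to verify that $\T$ is continuous, compact, and has bounded Schaefer set.

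Continuity of $\T$ follows from the continuity of $G$ together with a standard estimate for the difference of two solutions: if $v_n\to v$ in $\He$ and $w_n:=\T v_n-\T v$, then the monotonicity~\eqref{eq:3} of $A+\omega I_{H}$ yields $\tfrac{1}{2}\tfrac{d}{dt}\norm{w_n}^{2}_{H}\le \omega\norm{w_n}^{2}_{H}+(Gv_n-Gv,w_n)_{H}$, and Gr\"onwall forces $w_n\to 0$ in $C([0,T];H)\hookrightarrow\He$. Boundedness of the Schaefer set exploits the sublinear growth~\eqref{eq:26}: if $u=\lambda\T u$ with $\lambda\in(0,1]$, then $w:=u/\lambda$ solves $\dot{w}+Aw\ni Gu$ with $w(0)=u_0$, and the same energy identity combined with $\norm{Gu}_{H}^{2}\le 2L^{2}\norm{u}^{2}_{H}+2b^{2}\le 2L^{2}\norm{w}^{2}_{H}+2b^{2}$ (using $\lambda\le 1$) and Gr\"onwall produces a uniform $L^{\infty}(0,T;H)$-bound on $w$, hence on $u$, independent of $\lambda$.

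The main obstacle is showing that $\T$ is compact. Given a bounded set $B\subset\He$, $G(B)$ is bounded in $\He$ by~\eqref{eq:26}, so applying the four estimates~\eqref{eq:13}--\eqref{eq:11} of Theorem~\ref{thm:2} to $u=\T v$ with $v\in B$ furnishes uniform bounds on $\norm{u}_{L^{\infty}(0,T;H)}$, on $\int_{0}^{T}\varphi(u(s))\,\ds$, on $t\,\varphi(u(t))$ for each $t\in(0,T]$, and on $\norm{\sqrt{\cdot}\,\dot{u}}_{\He}$. Since $\varphi_{\omega}=\varphi+\tfrac{\omega}{2}\norm{\cdot}^{2}_{H}$, the $L^\infty$- and $t\varphi$-bounds combine into a bound on $\varphi_{\omega}(u(t))$ depending only on $\varepsilon$ for $t\in[\varepsilon,T]$, so by~\eqref{eq:4} each such $u(t)$ lies in a fixed compact set $K_{\varepsilon}\subset H$. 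The $\sqrt{\cdot}\,\dot{u}$-bound yields $\norm{\dot{u}}_{L^{2}(\varepsilon,T;H)}\le C\,\varepsilon^{-1/2}$, hence an equi-H\"older estimate on $\T(B)|_{[\varepsilon,T]}$ with values in $K_{\varepsilon}$; Arzel\`a--Ascoli then gives relative compactness in $C([\varepsilon,T];H)$. A diagonal extraction across $\varepsilon\downarrow 0$, combined with the uniform $L^{\infty}(0,T;H)$-bound to shrink the tail $\int_{0}^{\varepsilon}\norm{\cdot}^{2}_{H}\,\dt$ for two candidate subsequential limits, upgrades the convergence to $\He$. The delicate point is precisely this passage through $t=0$, where~\eqref{eq:10} degenerates and only the $L^{\infty}$-bound from~\eqref{eq:13} salvages the argument. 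Schaefer's theorem then produces a fixed point, finishing the proof.
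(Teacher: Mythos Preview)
Your proof is correct and takes essentially the same approach as the paper. The only packaging difference is that the paper isolates the compactness of the solution operator $P(u_{0},\cdot)$ as a preliminary lemma (using the same ingredients you list: estimates~\eqref{eq:13}--\eqref{eq:11}, pointwise compactness via~\eqref{eq:4}, equicontinuity on $[\delta,T]$, a diagonal extraction, and dominated convergence for the tail near $t=0$), while you fold this directly into the verification of Schaefer's hypotheses; the Schaefer-set bound and the final $H^{1}(0,T;H)$ upgrade via Theorem~\ref{thm:2} also match the paper's argument.
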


Note that $Gu\in \He$. Thus, the inclusion in~\eqref{eq:6} means
that $Gu(t)-\dot{u}(t)\in Au(t)$ a.e. on $(0,T)$. In particular, the
following regularity estimates hold for strong solutions
of~\eqref{eq:6}.

\begin{remark}
  For given $u_{0}\in\overline{D(\varphi)}$ and $f\in \He$, the
  solution $u$ of~\eqref{eq:6} satisfies
  \begin{displaymath}
    \varphi\circ u\in W^{1,1}_{loc}((0,T])\cap L^{1}(0,T),
  \end{displaymath}
\begin{equation} \label{eq:14}
   \norm{u(t)}_{H}\le \left(\norm{u_{0}}_{H}^{2}+
    \norm{b}_{L^{2}(0,T)}^{2}\right)^{\frac{1}{2}}\,e^{\frac{2L+1+2\omega}{2}\,t}
  \quad\text{for all $t\in [0,T]$.}
\end{equation}
{\hfill$\mbox{}_{\Box}$}
\end{remark}



The main example of perturbations $G$ allowed in Theorem~\ref{thm:3}
are Nemytskii operators on $\He=L^{2}(0,T;L^{2}(\Omega))$. Let
$\Omega\subseteq \R^{d}$ be open and
$g : (0,T)\times \Omega\times \R\to \R$ be a \emph{Carath\'eodory
  function}, that is,
\begin{align*}
\bullet\qquad  &g(\cdot,\cdot,v) : (0,T)\times \Omega\to \R\quad\text{is
    measurable, for all $v\in \R$,}\\
\bullet\qquad  &g(t,x,\cdot) : \R\to \R\quad\text{is
    continuous, for a.e. $(t,x)\in (0,T)\times \Omega$.}
\end{align*}
Assume furthermore that $g$ has \emph{sublinear growth}, that is,
there exist $L\ge 0$ and $f\in L^{2}(0,T;L^{2}(\Omega))$ such that
\begin{equation}
  \label{eq:27}
  \abs{g(t,x,v)}\le L\,\abs{v}+f(t,x)\quad\text{for all $v\in \R$,
    a.e. $(t,x)\in  (0,T)\times \Omega$.}
\end{equation}

\begin{proposition}\label{prop:3}
  Let $\He=L^{2}(0,T;L^{2}(\Omega))$. Then, the relation
  \begin{equation}
    \label{eq:28}
    Gv(t,x):=g(t,x,v(t,x))\quad\text{for a.e. $(t,x)\in
      (0,T)\times \Omega$, and every $v\in \He$,}
  \end{equation}
  defines a continuous operator $G : \He\to\He$ of
  sublinear growth~\eqref{eq:26}.
\end{proposition}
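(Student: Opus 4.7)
The plan is to exploit the Fubini isometry $\He = L^{2}(0,T;L^{2}(\Omega)) \cong L^{2}((0,T)\times \Omega)$, after which $G$ becomes a classical Nemytskii operator on this single $L^{2}$-space associated with the Carath\'eodory function $g$.

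First I would verify that $Gv$ is measurable on $(0,T)\times \Omega$ whenever $v \in \He$; this is the standard fact that the composition of a Carath\'eodory function with a measurable function is measurable (one approximates $v$ by simple functions in the third variable and uses continuity of $g(t,x,\cdot)$). The sublinear growth bound~\eqref{eq:26} is then immediate from~\eqref{eq:27} by taking the $L^{2}(\Omega)$-norm in $x$: for a.e. $t\in(0,T)$,
\[
\norm{Gv(t)}_{H} = \Big(\int_{\Omega}\abs{g(t,x,v(t,x))}^{2}\,\dx\Big)^{1/2} \le L\,\norm{v(t)}_{H} + b(t),
\]
where $b(t):=\norm{f(t,\cdot)}_{L^{2}(\Omega)}$ lies in $L^{2}(0,T)$ by Fubini applied to $f\in L^{2}((0,T)\times\Omega)$. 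This inequality in particular shows $Gv\in \He$, so $G$ is a well-defined self-map of $\He$ satisfying~\eqref{eq:26}.

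For continuity I would use the standard Nemytskii argument by contradiction. Assume $v_{n}\to v$ in $\He$ but $Gv_{n}\not\to Gv$; then some subsequence $(v_{n_{k}})$ and some $\varepsilon>0$ satisfy $\norm{Gv_{n_{k}}-Gv}_{\He}\ge \varepsilon$. Using the classical measure-theoretic fact that every $L^{2}$-convergent sequence admits a subsequence which converges pointwise a.e.\ and is dominated by a single $L^{2}$-function, I extract a further subsequence, still denoted $v_{n_{k}}$, together with $h\in L^{2}((0,T)\times\Omega)$ such that $v_{n_{k}}\to v$ a.e.\ and $\abs{v_{n_{k}}}\le h$ a.e.\ (hence also $\abs{v}\le h$ a.e.). The Carath\'eodory property of $g$ then yields $Gv_{n_{k}}(t,x)\to Gv(t,x)$ pointwise a.e., while~\eqref{eq:27} supplies the integrable dominant
\[
\abs{Gv_{n_{k}}(t,x)-Gv(t,x)} \le 2L\,h(t,x) + 2f(t,x) \in L^{2}((0,T)\times\Omega).
\]
Lebesgue's dominated convergence theorem then forces $Gv_{n_{k}}\to Gv$ in $\He$, contradicting the choice of subsequence.

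The only genuinely nonroutine point is the extraction step producing simultaneous a.e.\ convergence \emph{and} a common $L^{2}$-dominant; this is the essence of the Riesz-style subsequence lemma and is what allows the sublinear growth assumption~\eqref{eq:27} to convert pointwise continuity of $g(t,x,\cdot)$ into norm continuity of $G$. Everything else is bookkeeping.
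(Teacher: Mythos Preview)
Your proposal is correct and follows essentially the same approach the paper indicates: the paper gives no detailed proof but remarks that the result is routine once one uses that $f_{n}\to f$ in $\He$ if and only if every subsequence has a dominated subsequence converging to $f$ a.e.\ --- precisely the extraction step you highlight as the ``only genuinely nonroutine point''. Your derivation of~\eqref{eq:26} from~\eqref{eq:27} via Minkowski in the $x$-variable, with $b(t)=\norm{f(t,\cdot)}_{L^{2}(\Omega)}$, is exactly what one expects and is more explicit than the paper itself.
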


The proof is routine (cf~\cite[Proposition~26.7]{MR1033498}) if one
uses that $f_{n}\to f$ in $\He$ if and only if each subsequence of
$(f_{n})_{n\ge 1}$ has a dominated subsequence converging to $f$
a.e. (which is well known from the completeness proof of
$L^2$).\medskip


We illustrate our result by reconsidering Example~\ref{ex:1} adding a
perturbation of Nemytskii type.

\begin{example}[{\bfseries Example~\ref{ex:1} revisited}]\label{ex:1bis}
  For $\max\{1,\frac{2d}{d+2}\}<p<\infty$, let
  $V=W^{1,p}_{0}(\Omega)$, $H=L^{2}(\Omega)$,
  $\He=L^{2}((0,T)\times\Omega)$ and let $\varphi$ be given
  by~\eqref{eq:23}.  Then, there is an $\omega>0$ such that
  $\varphi_{\omega}$ is convex and for every $c>0$, the sublevel set
  $E_{\omega\!,c}$ is compact in $L^{2}(\Omega)$. Furthermore, let
  $g : (0,T)\times \Omega\times \R\to \R$ be a Carath\'edory function
  with sublinear growth and $u_{0}\in \overline{D(\varphi)}$. Then,
  there is at least one solution
  $u\in H^{1}_{loc}((0,T];H)\cap C([0,T];H)$ of the parabolic
  boundary-value problem
  \begin{displaymath}
    \begin{cases}
      \partial_{t}u(t,\cdot)
      -\Delta_{p}u(t,\cdot)+\beta(u(t,\cdot))+f_{1}(\cdot,u(t,\cdot))\ni g(t,\cdot,u(t,\cdot)) & \text{on
      $(0,T)\times \Omega$,}\\
    \hspace{6.56cm}u(t,\cdot)=0 & \text{on
      $(0,T)\times \partial\Omega$,}\\
    \hspace{6.52cm}u(0,\cdot)=u_{0} &\text{on
      $\Omega$.}
    \end{cases}
  \end{displaymath}
\end{example}

In general, the solutions in Example~\ref{ex:1bis} are not unique. We
give an example.

\begin{example}[{\bfseries Non-uniqueness}]\label{ex:2}
  Let $g(u)=\sqrt{\abs{u}}$, $u\in \R$, and $\Omega$ be an open
  and bounded subset of $\R^{d}$, $d\ge 1$, with a Lipschitz boundary
  $\partial\Omega$. Then, there are $L$, $b>0$ such that $\hat{g}$ satisfies
  \begin{displaymath}
    \abs{g(u)}\le L\,\abs{u}+b\qquad\text{for every $u\in \R$.}
  \end{displaymath}
  Thus, for $H=L^{2}(\Omega)$, one has that
  $\He=L^{2}((0,T)\times\Omega)$ and the associated Nemytskii operator
  $G : \He\to \He$ defined by~\eqref{eq:28} satisfies the sublinear growth condition~\eqref{eq:26}. For
  $\max\{1,\frac{2d}{d+2}\}<p<+\infty$, let
  $\varphi : L^{2}(\Omega)\to (-\infty,+\infty]$ be the energy
  function~\eqref{eq:24} associated with the negative Neumann
  $p$-Laplacian $-\Delta_{p}^{\!  N}$ on $\Omega$. Then, by Theorem~\ref{thm:3},
  for every $u_{0}\in L^{2}(\Omega)$ and every $T>0$, there is a
  solution $u \in H^{1}_{loc}((0,T]; L^{2}(\Omega))\cap C([0,T];L^{2}(\Omega))$ of
  \begin{equation}
    \label{eq:15}
  \begin{cases}
   \hspace{10pt}\partial_{t}u(t,\cdot)-\Delta_{p}^{\! N}u(t,\cdot)= \sqrt{\abs{u}(t,\cdot)}&
   \text{in $(0,T)\times \Omega$,}\\
    \abs{\nabla u(t,\cdot)}^{p-2}D_{\nu}u(t,\cdot)=0 &\text{on
     $(0,T)\times \partial\Omega$,}\\
    \hspace{80pt}u(0)=u_{0}&\text{on $\Omega$.}\\
  \end{cases}
\end{equation}
Here, $\abs{\nabla u}^{p-2}D_{\nu}u$ denotes the (weak) co-normal
derivative of $u$ on $\partial\Omega$ (cf~\cite{CoulHau2017}). Now, for the initial value
$u_{0}\equiv 0$ on $\Omega$, the constant zero function $u\equiv 0$ is
certainly a solution of~\eqref{eq:15}. For constructing a non-trivial
solution of~\eqref{eq:15} with initial value $u_{0}\equiv 0$, let
$w\in C^{1}[0,T]$ be a non-trivial solution of the following classical
ordinary differential equation
\begin{equation}
  \label{eq:16}
  w'=\sqrt{\abs{w}}\text{ on $(0,T)$, $w(0)=0$,}
\end{equation}
For instance, one non-trivial solution is $w(t)=t^2/4$. Since for
every constant $c\in \R$, $-\Delta_{p}^{\!
  N}(c\mathds{1}_{\Omega})=0$, the function $u(t):=w(t)$ is another
non-trivial solution of~\eqref{eq:15} with initial value $u_{0}\equiv 0$. 
\end{example}

%
%
\section{Proof of the main result}
\label{sec:proof}

For the proof of Theorem~\ref{thm:3}, we need some auxiliary
results. The first concerns continuity and is standard (see
B\'enilan~\cite[(6.5), p87]{Benilanbook} or Barbu~\cite[(4.2),
p128]{MR2582280}).

\begin{lemma}
  \label{lem:1}
  Let $f_{1}$, $f_{2}\in \He$, $u_{1}$, $u_{2}\in
  H^1(0,T;H)$ such that
  \begin{align*}
    & \dot{u}_{1}+Au_{1}\ni f_{1}\qquad\text{on $(0,T)$,}\\
    & \dot{u}_{2}+Au_{2}\ni f_{2}\qquad\text{on $(0,T)$.}
  \end{align*}
  Then,
  \begin{equation}
    \label{eq:9}
    \norm{u_{1}(t)-u_{2}(t)}_{H}\le e^{\omega
      t}\norm{u_{1}(0)-u_{2}(0)}_{H}+\int_{0}^{t}e^{\omega (t-s)}\norm{f_{1}(s)-f_{2}(s)}_{H}\,\ds
  \end{equation}
  for every $t\in [0,T]$.
\end{lemma}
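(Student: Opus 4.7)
The plan is to combine the $\omega$-monotonicity of $A$, which holds precisely because $\varphi_\omega$ is convex (so $A+\omega I_H = \partial\varphi_\omega$ is monotone as a subdifferential of a convex function), with the $H^1$ chain rule and a Gronwall-type argument.

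First I would set $v_i(t) := f_i(t)-\dot u_i(t)$, so that $v_i(t)\in Au_i(t)$ for a.e.\ $t\in(0,T)$ and $i=1,2$. The $\omega$-monotonicity of $A$ yields
\begin{equation*}
  (v_1(t)-v_2(t),\, u_1(t)-u_2(t))_H \;\ge\; -\omega\,\norm{u_1(t)-u_2(t)}_H^{2}\qquad\text{a.e.\ on $(0,T)$,}
\end{equation*}
which upon substituting $v_i=f_i-\dot u_i$ becomes
\begin{equation*}
  (\dot u_1(t)-\dot u_2(t),\, u_1(t)-u_2(t))_H \;\le\; \omega\,\norm{u_1(t)-u_2(t)}_H^{2} + (f_1(t)-f_2(t),\,u_1(t)-u_2(t))_H.
\end{equation*}
Next I would invoke the standard chain rule for functions in $H^{1}(0,T;H)$, which tells us that $t\mapsto \tfrac12\norm{u_1(t)-u_2(t)}_H^{2}$ is absolutely continuous with derivative $(\dot u_1-\dot u_2,\,u_1-u_2)_H$. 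Together with Cauchy--Schwarz, this produces the differential inequality
\begin{equation*}
  \tfrac{1}{2}\tfrac{\td}{\dt}\norm{u_1(t)-u_2(t)}_H^{2} \;\le\; \omega\,\norm{u_1(t)-u_2(t)}_H^{2} + \norm{f_1(t)-f_2(t)}_H\,\norm{u_1(t)-u_2(t)}_H.
\end{equation*}

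To convert this into an estimate on $\rho(t):=\norm{u_1(t)-u_2(t)}_H$ itself, I would use the regularisation $\rho_\varepsilon(t):=\sqrt{\rho(t)^{2}+\varepsilon^{2}}$, which is absolutely continuous and everywhere positive. Since $\tfrac{\td}{\dt}\rho_\varepsilon^{2}=\tfrac{\td}{\dt}\rho^{2}$, the inequality above rewrites as $\rho_\varepsilon(t)\,\rho_\varepsilon'(t)\le \omega\,\rho_\varepsilon(t)^{2}+\norm{f_1(t)-f_2(t)}_H\,\rho_\varepsilon(t)$, and dividing by $\rho_\varepsilon(t)>0$ gives $\rho_\varepsilon'(t)\le \omega\,\rho_\varepsilon(t)+\norm{f_1(t)-f_2(t)}_H$. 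Gronwall's lemma then yields
\begin{equation*}
  \rho_\varepsilon(t)\;\le\; e^{\omega t}\rho_\varepsilon(0) + \int_{0}^{t}e^{\omega(t-s)}\norm{f_1(s)-f_2(s)}_H\,\ds,
\end{equation*}
and sending $\varepsilon\downarrow 0$ produces exactly \eqref{eq:9}.

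The only non-routine point is the passage from the differential inequality on $\rho^{2}$ to one on $\rho$: the square root is not differentiable at zeros of $\rho$, and I expect this to be the main (minor) obstacle. The $\rho_\varepsilon$ smoothing handles it cleanly; alternatively one could apply a Gronwall argument directly to $\rho^{2}$ using the elementary bound $2\norm{f_1-f_2}_H\rho\le \rho^{2}+\norm{f_1-f_2}_H^{2}$ and then extract the sharp form \eqref{eq:9} by an optimisation, but the $\sqrt{\rho^{2}+\varepsilon^{2}}$ trick is more direct.
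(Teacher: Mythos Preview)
Your argument is correct and is precisely the standard derivation of \eqref{eq:9}. The paper itself does not give a proof of this lemma: it simply calls the result standard and cites B\'enilan's book and Barbu~\cite{MR2582280} for it. Your approach---use the $\omega$-monotonicity of $A=\partial\varphi$ (equivalently, the monotonicity of $\partial\varphi_\omega$), apply the $H^{1}$ chain rule to $\tfrac12\norm{u_1-u_2}_H^2$, and then pass to $\rho$ via the $\sqrt{\rho^2+\varepsilon^2}$ regularisation followed by Gronwall---is exactly the argument one finds in those references, so there is nothing to compare.
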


Next, we establish the compactness of the \emph{solution operator} $P$
associated with evolution problem~\eqref{eq:5}. Note, for convenience,
we write here $\He$ to denote $\He$, ($T>0$), and
recall that the closure $\overline{D(\varphi)}$ in $H$ of the effective
domain of a semiconvex function $\varphi$ is a convex subset of $H$.

\begin{lemma}
  \label{lem:2bis}
  Let $P : \overline{D(\varphi)}\times \He\to \He$ be the mapping defined dy
  \begin{displaymath}
    P(u_{0},f)=\textit{``solution $u$
      of~\eqref{eq:5}''}\qquad\text{for every $u_{0}\in \overline{D(\varphi)}$ and $f\in \He$.}
  \end{displaymath}
  Then, $P$ is continuous and compact.
\end{lemma}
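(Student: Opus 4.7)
The plan is to establish continuity directly from the contraction estimate of Lemma~\ref{lem:1}, and compactness from the a priori bounds of Theorem~\ref{thm:2} combined with the sublevel-set compactness hypothesis~\eqref{eq:4}.

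For continuity, I suppose $(u_{0}^{n}, f_{n}) \to (u_{0}, f)$ in $H \times \He$ with $u_{0}^{n}, u_{0} \in \overline{D(\varphi)}$, and set $u_{n} := P(u_{0}^{n}, f_{n})$, $u := P(u_{0}, f)$. Although Lemma~\ref{lem:1} is stated for strong $H^{1}(0,T;H)$-solutions, the estimate~\eqref{eq:9} extends by density to solutions in $H^{1}_{\mathrm{loc}}((0,T]; H)\cap C([0,T];H)$: approximate $u_{0}$ and $u_{0}^{n}$ by elements of $D(\varphi)$, apply~\eqref{eq:9} to the regularized solutions, and pass to the limit using continuous dependence in $C([0,T];H)$. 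Combining~\eqref{eq:9} with the Cauchy--Schwarz inequality gives
\[
  \sup_{t \in [0,T]} \norm{u_{n}(t) - u(t)}_{H} \le e^{\abs{\omega} T}\bigl(\norm{u_{0}^{n} - u_{0}}_{H} + \sqrt{T}\,\norm{f_{n} - f}_{\He}\bigr) \to 0,
\]
hence $\norm{u_{n} - u}_{\He} \to 0$.

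For compactness, let $(u_{0}^{n}, f_{n})$ be bounded in $\overline{D(\varphi)} \times \He$ and set $u_{n} := P(u_{0}^{n}, f_{n})$. Estimate~\eqref{eq:13} gives $\sup_{n} \sup_{t \in [0,T]} \norm{u_{n}(t)}_{H} \le C_{1}$, while~\eqref{eq:21} and~\eqref{eq:10} together yield $t\,\varphi(u_{n}(t)) \le C_{2}$ for every $t \in (0,T]$ and every $n$. Hence, for each fixed $t > 0$ and every $n$,
\[
  \varphi_{\omega}(u_{n}(t)) = \varphi(u_{n}(t)) + \tfrac{\omega}{2}\norm{u_{n}(t)}_{H}^{2} \le \tfrac{C_{2}}{t} + \tfrac{\omega}{2}C_{1}^{2} =: c(t),
\]
so $u_{n}(t) \in E_{\omega, c(t)}$, which is compact in $H$ by~\eqref{eq:4}. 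Next,~\eqref{eq:11} provides the uniform bound $\int_{0}^{T} t\,\norm{\dot{u}_{n}(t)}_{H}^{2}\,\dt \le C_{3}$, so for $0 < \epsilon \le s < t \le T$ Cauchy--Schwarz gives
\[
  \norm{u_{n}(t) - u_{n}(s)}_{H} \le \int_{s}^{t} \tau^{-1/2}\cdot\tau^{1/2}\norm{\dot{u}_{n}(\tau)}_{H}\,\mathrm{d}\tau \le \sqrt{C_{3}}\,\sqrt{\log(t/s)},
\]
showing equicontinuity of $(u_{n})$ on $[\epsilon, T]$ uniformly in $n$.

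By the Arzela--Ascoli theorem applied on each $[\epsilon, T]$, combined with a diagonal extraction over $\epsilon = 1/k$, a subsequence $(u_{n_{k}})$ converges in $C([\epsilon, T]; H)$ for every $\epsilon > 0$ to some $u$. The uniform bound $\sup_{n}\norm{u_{n}}_{L^{\infty}(0,T;H)} \le C_{1}$ controls the contribution near zero via $\int_{0}^{\epsilon}\norm{u_{n_{k}}(t)-u(t)}_{H}^{2}\,\dt \le 4C_{1}^{2}\epsilon$, so splitting the $\He$-norm at $\epsilon$ and letting $\epsilon \to 0$ yields $u_{n_{k}} \to u$ in $\He$, proving compactness. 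The main technical obstacle is extending the contraction estimate~\eqref{eq:9} to initial data merely in $\overline{D(\varphi)}$ (where strict solutions may fail to lie in $H^{1}(0,T;H)$); after this, the argument is a routine assembly of the a priori estimates of Theorem~\ref{thm:2}, the sublevel-set compactness of $\varphi_{\omega}$, and Arzela--Ascoli with a diagonal extraction.
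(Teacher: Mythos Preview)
Your proof is correct and follows essentially the same approach as the paper's: continuity via the contraction estimate of Lemma~\ref{lem:1}, and compactness via the a~priori bounds~\eqref{eq:13}, \eqref{eq:21}, \eqref{eq:10}, \eqref{eq:11} together with the sublevel-set compactness~\eqref{eq:4}, equicontinuity on $[\delta,T]$, a diagonal extraction, and dominated convergence to pass to $\He$. The only presentational differences are that you invoke Arzel\`a--Ascoli directly on each $[\epsilon,T]$ (the paper instead diagonalizes over a countable dense subset of $(0,T]$ and then uses equicontinuity, which is equivalent), and you exhibit the explicit modulus $\sqrt{\log(t/s)}$ for equicontinuity where the paper appeals to the embedding $H^{1}(\delta,T;H)\hookrightarrow C^{\gamma}([\delta,T];H)$.
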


\begin{proof}
  (a) By Lemma~\ref{lem:1}, the map $P$ is continuous from
  $\overline{D(\varphi)}\times \He$ to $\He$.\medskip

  (b) We show that $P$ is compact. Let
  $(u_{n}^{(0)})_{n\ge 1}\subseteq \overline{D(\varphi)}$ and
  $(f_{n})_{n\ge 1}\subseteq \He$ such that
  $\norm{u_{n}^{(0)}}_{H}+\norm{f_{n}}_{\He}\le c$ and
  $u_{n}=P(u_{n}^{(0)},f_{n})$ for every $n\ge 1$. Then, by~\eqref{eq:13},
  \eqref{eq:21} and by~\eqref{eq:11}, for every $\delta\in (0,T)$,
  there is a $c_{\delta}>0$ such that
  \begin{displaymath}
    \sup_{n\ge 1}\norm{u_{n}}_{H^{1}(\delta,T;H)}\le c_{\delta}.
  \end{displaymath}
  Since $H^{1}(\delta,T;H)\hookrightarrow C^{\gamma}([\delta,T];H)$
  for some $\gamma\in (0,1)$, it follows that the sequence
  $(u_{n})_{n\ge 1}$ is equicontinuous on $[\delta,T]$ for each
  $0<\delta<T$. Choose a countable dense subset $D:=\{t_{m}\vert\,m\in
  \N\}$ of $(0,T]$. Let $m\ge 1$. Then by~\eqref{eq:10},
  \begin{displaymath}
    \sup_{n\ge 1}\varphi(u_{n}(t_{m}))\qquad\text{ is finite}
  \end{displaymath}
  and since by~\eqref{eq:13}, $(u_{n}(t_{m}))_{n\ge 1}$ is bounded in $H$,
  there is a $c'>0$ such that $(u_{n}(t_{m}))_{n\ge 1}$ is in the
  sublevel set $E_{\!\omega,c'}$. Thus and by the
  assumption~\eqref{eq:4}, $(u_{n}(t_{m}))_{n\ge 1}$ has a convergent
  subsequence in $H$. By Cantor's diagonalization argument, we find a
  subsequence $(u_{n_{k}})_{k\ge1}$ of $(u_{n})_{n\ge 1}$ such that
  \begin{displaymath}
    \lim_{k\to +\infty}u_{n_{k}}(t_{m})\qquad\text{exists in $H$ for
      all $m\in \N$.}
  \end{displaymath}
  It follows from the equicontinuity of $(u_{n_{k}})_{k\ge 1}$ that
  $u_{n_{k}}$ converges in $C([\delta,T];H)$ for all $\delta\in
  (0,T]$. In particular, $(u_{n_{k}}(t))_{k\ge 1}$ converges in $H$
  for every $t\in (0,T)$ and by~\eqref{eq:13},  $(u_{n_{k}})_{k\ge 1}$
  is uniformly bounded in $L^{\infty}(0,T;H)$. Thus, it follows from
  Lebesgue's dominated convergence theorem that
  $u_{n_{k}}=P(u_{n_{k}}^{(0)},f_{n_{k}})$ converges in $\He$.
\end{proof}

\begin{remark}
  In the previous proof, we have actually shown that $P$ is compact
  from $\overline{D(\varphi)}\times \He$ into the
  Fr\'echet space $C((0,T];H)$.
\end{remark}

With these preliminaries, we can now give the proof of our main
result. Here, we got inspired from the linear case (cf~\cite{MR2722654}).

\begin{proof}[Proof of Theorem~\ref{thm:3}]
  First, let $u_{0}\in \overline{D(\varphi)}$.\medskip

  Let $v\in \He$. Then $Gv\in \mathcal{H}$ and so, by
  Br\'ezis' maximal $L^{2}$-regularity result (Theorem~\ref{thm:2}),
  there is a unique solution
  $u\in H^{1}_{loc}((0,T];H)\cap C([0,T];H)$ of the evolution problem
  \begin{displaymath}
  \begin{cases}
    \dot{u}(t)+Au(t)\ni Gv(t) & \text{a.e. on $(0,T)$,}\\
    \hspace{39pt}u(0)=u_{0}.& 
  \end{cases}
\end{displaymath}
Let $\T{}v:=P(u_{0},Gv)$. Then by the continuity of $G$ and since
$P(u_{0},\cdot) : \He \to\He$ is continuous and compact
(Lemma~\ref{lem:2bis}), the mapping $\T : \He\to \He$
is continuous and compact.\medskip

a) We consider the Schaefer set  
\begin{displaymath}
    \mathcal{S}:=\Big\{u\in \He\,\Big\vert\,\text{there exists
    }\lambda\in [0,1]\text{ s.t. }u=\lambda \T{}u \Big\}.
  \end{displaymath}
We show that $\mathcal{S}$ is bounded in $\mathcal{H}$. Let $u\in
\mathcal{S}$. We may assume that $\lambda\in (0,1]$, otherwise,
$u\equiv 0$. Then, one has that $u\in H^{1}_{loc}((0,T];H)\cap C([0,T];H)$ and 
\begin{displaymath}
  \begin{cases}
  \displaystyle  \frac{\dot{u}}{\lambda}
  +A\left(\frac{u}{\lambda}\right)\ni Gu & \text{on $(0,T)$,}\\
    \hspace{1,2cm}u(0)=u_{0}.& 
  \end{cases}
\end{displaymath}
It follows from~\eqref{eq:3} that
\begin{displaymath}
  \left(-\frac{\dot{u}}{\lambda}(t)+Gu(t)+\omega
  \frac{u}{\lambda}(t), \frac{u}{\lambda}\right)_{\!\! H}\ge 0\qquad\text{for
    a.e. $t\in (0,T)$.}
\end{displaymath}
Thus and by~\eqref{eq:26}, 
\begin{align*}
  \frac{\td}{\dt}\tfrac{1}{2}\norm{u(t)}_{H}^{2}
  & = (\dot{u}(t),u(t))_{H}\\
  & = (\dot{u}(t)-\lambda Gu(t)-\omega \lambda u(t),u(t))_{H}\\
  &\hspace{3cm}  +(\lambda Gu(t)+\omega \lambda u(t),u(t))_{H}\\
  & \le (\lambda Gu(t)+\omega \lambda u(t),u(t))_{H}\\
  & \le \lambda \left(
    \norm{Gu(t)}_{H}\,\norm{u(t)}_{H}+\omega
    \,\norm{u(t)}_{H}^{2}\right)\\
   & \le \lambda \left(
      L\,\norm{u(t)}_{H}^{2}+b(t)\,\norm{u(t)}_{H}+\omega \,\norm{u(t)}_{H}^{2}\right)\\
   & \le (2L+1+2\omega)\, \tfrac{1}{2}\norm{u(t)}_{H}^{2}  +\tfrac{1}{2}b^{2}(t)
\end{align*}
for a.e. $t\in (0,T)$. 
It follows from Gronwall's lemma that~\eqref{eq:14} holds for every
$t\in [0,T]$. Thus, $\mathcal{S}$ is bounded in $\mathcal{H}$. Now,
Schaefer's fixed point theorem implies that there exists
$u\in \mathcal{H}$ such that $u=\T{}u$; that is, 
$u\in H^{1}_{loc}((0,T];H)\cap C([0,T];H)$ is a solution of the evolution
problem~\eqref{eq:6}.\medskip 

b) Let $u_{0}\in D(\varphi)$. Then, by the first part of this proof,
there is a solution solution $u\in H^{1}_{loc}((0,T];H)\cap C([0,T];H)$ of the evolution
problem~\eqref{eq:6}. However, by Br\'ezis' maximal regularity result
applied to $f=Gu\in \He$, it follows that $u\in
H^{1}(0,T;H)$. This completes the proof of this theorem.
\end{proof}

%
%
\section{Application to $j$-elliptic functions}
\label{sec:j-elliptic-functionals}

In the previous examples (cf~Examples~\ref{ex:1} and
Example~\ref{ex:2}), $V$ is a Banach space injected in $H$. 
Recently, in 
\cite{MR3465809}, Chill, Hauer and Kennedy extended results of
\cite{MR2823661}, \cite{MR2881534} by Arendt and Ter Elst to a nonlinear framework of
\emph{$j$-elliptic functions} $\varphi : V\to (-\infty,+\infty]$
generating a quasi maximal monotone operator $\partial_{j}\varphi$ on
$H$, where $j : V\to H$ is just a linear operator which is not
necessarily injective. This enabled the authors of~\cite{MR3465809} to show that several
coupled parabolic-elliptic systems can be realized as a gradient
system in a Hilbert space $H$ and to extend the linear variational
theory of the Dirichlet-to-Neumann operator to the nonlinear
$p$-Laplace operator (see also~\cite{MR3403408,arXiv:1804.08272,HauMaz2019} for
further applications and extensions of this theory).\medskip

The aim of this section is to illustrate that the main
Theorem~\ref{thm:3} of Section~\ref{sec:main-results} can also be
applied to the framework of $j$-elliptic functions.\medskip

Let us briefly recall some basic notions and facts about $j$-elliptic
functions from~\cite{MR3465809}. Let $V$ be a real locally convex
topological vector space and $j: V \to H$ be a linear operator
which is merely weak-to-weak continuous (and, in general, not
injective). Given a function $\varphi : V \to  (-\infty,+\infty]$,
then the \emph{$j$-subdifferential} is the operator
\begin{displaymath}
  \partial_j\varphi := \Bigg\{ (u,f)\in H\times H\;\Bigg\vert\;
  \begin{array}[c]{c}
    \exists \hat{u}\in D(\varphi )
    \text{ s.t. } j(\hat{u}) = u \text{ and for every } \hat{v}\in V, \\[1pt]
   \displaystyle \liminf_{t\searrow 0} \frac{\varphi(\hat{u} +
      t\hat{v}) -\varphi(\hat{u})}{t} \ge (f,j(\hat{v}))_{H}
  \end{array}
  \Bigg\}.
\end{displaymath}
The function $\varphi$ is called \emph{$j$-semiconvex} if there exists
$\omega\in\R$ such that the ``shifted'' function $\varphi_\omega : V
\to (-\infty,+\infty]$ given by
\begin{displaymath}
  \varphi(\hat{u}) + \frac{\omega}{2} \,
  \norm{j(\hat{u})}_H^2\qquad\text{for every $\hat{u}\in V$,}
\end{displaymath}
is convex. If $V=H$ and $j=I_{H}$, then $j$-semiconvex functions
$\varphi$ are the \emph{semiconvex} ones (see
Section~\ref{sec:1}). The function $\varphi$ is called
\emph{$j$-elliptic} if there exists $\omega\ge 0$ such that
$\varphi_\omega$ is convex and for every $c\in \R$, the sublevel sets
$\{\hat{u}\in V\,\vert\, \varphi_{\omega}(u) \le c\}$ are relatively weakly
compact. Finally, we say that the function $\varphi$ is {\em lower
  semicontinuous} if the sublevel sets $\{ \varphi \le c\}$ are closed in
the topology of $V$ for every $c\in\R$. It was highlighted
in~\cite[Lemma~2.2]{MR3465809} that
\begin{enumerate}[(a)]
\item If $\varphi$ is $j$-semiconvex, then there is an $\omega\in
  \R$ such that 
    \begin{displaymath}
      \partial_j\varphi  = \Bigg\{ (u,f)\in H\times H \;\Bigg\vert\;
      \begin{array}[c]{c}
        \exists \hat{u}\in D(\varphi)
        \text{ s.t. }
        j(\hat{u}) = u \text{ and for every } \hat{v}\in V \\[0,1cm]
        \varphi_\omega (\hat{u} + \hat{v}) - \varphi_\omega
        (\hat{u}) \ge (f + \omega j(\hat{u}) , j(\hat{v}))_{H}
      \end{array}
      \Bigg\} .
    \end{displaymath}
  \item[(b)] If $\varphi$ is G\^ateaux differentiable with directional
    derivative $D_{\hat{v}}\varphi$, ($\hat{v}\in V$), then
    \begin{displaymath}
      \partial_j\varphi  = \Bigg\{ (u,f)\in H\times H \;\Bigg\vert\;
      \begin{array}[c]{c}
        \exists \hat{u}\in D(\varphi)
        \text{ s.t. } j(\hat{u}) = u \text{ and for every }
        \hat{v}\in V\\[0,1cm]
        D_{\hat{v}}\varphi(\hat{u})= (f,j(\hat{v}))_H
      \end{array}
      \Bigg\} .
    \end{displaymath}
\end{enumerate}


The main result in~\cite{MR3465809} is that the $j$-subdifferential $\partial_{j}\varphi$
of a $j$-elliptic function $\varphi$ is already a
classical subdifferential. More precisely, the following holds.

\begin{theorem}[{\cite[Corollary~2.7]{MR3465809}}]\label{cor:27}
  Let $\varphi : V\to (-\infty,+\infty]$ be proper, lower semicontinuous, 
  and $j$-elliptic. Then there is a proper, lower semicontinuous,
  semiconvex function $\varphi^{H} : H\to (-\infty,+\infty]$ such that
  $\partial_j\varphi= \partial\varphi^{H}$. The function $\varphi^{H}$
  is unique up to an additive constant.
\end{theorem}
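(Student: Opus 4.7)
The natural candidate is the push-forward of $\varphi$ along $j$, namely
$$\varphi^{H}(u) := \inf\{\varphi(\hat{u}) \,:\, \hat{u} \in D(\varphi), \, j(\hat{u}) = u\},$$
with the convention $\inf \emptyset = +\infty$. The plan is to verify its properties one by one and then identify $\partial \varphi^{H}$ with $\partial_{j}\varphi$.

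Properness is immediate from properness of $\varphi$. For semiconvexity, note that $\|j(\hat{u})\|_{H}^{2}$ depends on $\hat{u}$ only through $j(\hat{u})$, so the shifted function satisfies
$$\varphi^{H}_{\omega}(u) = \varphi^{H}(u) + \tfrac{\omega}{2}\|u\|_{H}^{2} = \inf\{\varphi_{\omega}(\hat{u}) \,:\, j(\hat{u}) = u\},$$
which is the direct image of the convex function $\varphi_{\omega}$ under the linear map $j$, hence convex on $H$. The decisive step is lower semicontinuity, where $j$-ellipticity enters in full force. Given $u_{n} \to u$ strongly in $H$ with $\varphi^{H}(u_{n}) \le c$, I would pick quasi-minimizers $\hat{u}_{n} \in V$ with $j(\hat{u}_{n}) = u_{n}$ and $\varphi(\hat{u}_{n}) \le c + 1/n$; the bound $\varphi_{\omega}(\hat{u}_{n}) \le c + 1/n + \tfrac{\omega}{2}\|u_{n}\|_{H}^{2}$ places the sequence into a weakly relatively compact sublevel set of $\varphi_{\omega}$, so a subsequence $\hat{u}_{n_{k}}$ converges weakly in $V$ to some $\hat{u}$. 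Weak-to-weak continuity of $j$ gives $j(\hat{u}_{n_{k}}) \rightharpoonup j(\hat{u})$ weakly in $H$, which combined with the strong convergence $u_{n_{k}} \to u$ forces $j(\hat{u}) = u$. Convexity plus lower semicontinuity of $\varphi_{\omega}$ implies weak lower semicontinuity, whence $\varphi(\hat{u}) \le c$, and therefore $\varphi^{H}(u) \le c$. The same argument, applied to an exact minimizing sequence, shows that the infimum defining $\varphi^{H}(u)$ is attained whenever finite.

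For the identification $\partial_{j}\varphi = \partial \varphi^{H}$, characterization (a) stated just before the theorem is the workhorse. Given $(u, f) \in \partial \varphi^{H}$, pick an attaining $\hat{u}$ with $j(\hat{u}) = u$ and $\varphi(\hat{u}) = \varphi^{H}(u)$; then for any $\hat{v} \in V$ the semiconvex subgradient inequality
$$\varphi^{H}_{\omega}(u + j(\hat{v})) - \varphi^{H}_{\omega}(u) \ge (f + \omega u, j(\hat{v}))_{H},$$
combined with $\varphi^{H}_{\omega}(u + j(\hat{v})) \le \varphi_{\omega}(\hat{u} + \hat{v})$ and $\varphi^{H}_{\omega}(u) = \varphi_{\omega}(\hat{u})$, produces the defining $j$-subgradient inequality with witness $\hat{u}$. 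Conversely, if $(u, f) \in \partial_{j}\varphi$ with witness $\hat{u}$, testing the inequality with $\hat{v}$ satisfying $j(\hat{v}) = 0$ reveals that $\hat{u}$ minimizes $\varphi$ on its fiber, so $\varphi(\hat{u}) = \varphi^{H}(u)$; for any $v \in H$ with $u + v \in j(V)$, running over all $\hat{w}$ with $j(\hat{w}) = u + v$ in the $j$-subgradient inequality and passing to the infimum yields $(u, f) \in \partial \varphi^{H}$ (the complementary case $u + v \notin j(V)$ is trivial). Uniqueness up to an additive constant is then the classical fact that a proper, lower semicontinuous convex function on a Hilbert space is determined by its subdifferential up to a constant, applied to $\varphi^{H} + \tfrac{\omega}{2}\|\cdot\|_{H}^{2}$.

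The main obstacle will be the lower semicontinuity step: one must carefully combine the weak compactness of the sublevel sets of $\varphi_{\omega}$ in $V$, the weak-to-weak continuity of $j$, and the strong convergence in $H$ to correctly identify the weak limit as an element of $j^{-1}(u)$. This is precisely where $j$-ellipticity (as opposed to mere $j$-semiconvexity) is indispensable, and also the step that guarantees the infimum defining $\varphi^{H}$ is always attained, a fact needed again in establishing $\partial_{j}\varphi \supseteq \partial\varphi^{H}$.
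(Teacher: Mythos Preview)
The paper does not give its own proof of this statement: it is quoted verbatim as Corollary~2.7 of \cite{MR3465809} and used as a black box, so there is no in-paper argument to compare against. What the paper \emph{does} record is the companion result Theorem~\ref{thm:29}, which identifies $\varphi^{H}$ (up to an additive constant) with the fibrewise infimum $u\mapsto \inf_{\hat u\in j^{-1}(\{u\})}\varphi(\hat u)$ in the convex case, and your construction is exactly this push-forward. In that sense your approach is the canonical one and is fully consistent with how the paper presents the theory.

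Your argument is correct. A few remarks on the points you flag as delicate. For lower semicontinuity you need weak lower semicontinuity of $\varphi_{\omega}$ on $V$; this follows from convexity of $\varphi_{\omega}$ together with (strong) lower semicontinuity of $\varphi$ and lower semicontinuity of $\hat u\mapsto \|j(\hat u)\|_{H}^{2}$ (the latter because weak-to-weak continuity of $j$ plus weak lower semicontinuity of the $H$-norm give it), via the standard Hahn--Banach argument on a locally convex space. For the inclusion $\partial_{j}\varphi\subseteq\partial\varphi^{H}$, the reduction ``test with $\hat v\in\ker j$'' indeed shows that the witness $\hat u$ minimises $\varphi_{\omega}$---and hence $\varphi$, since $\|j(\cdot)\|_{H}$ is constant on the fibre---over $j^{-1}(\{u\})$, so $\varphi^{H}(u)=\varphi(\hat u)$; your subsequent infimisation over $\hat w\in j^{-1}(\{u+v\})$ then gives the subgradient inequality for $\varphi^{H}_{\omega}$, with the case $u+v\notin j(D(\varphi))$ trivial. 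Uniqueness up to a constant is, as you say, the classical statement for convex lsc functions applied to $\varphi^{H}_{\omega}$.
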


Thus the operator $A=\partial_j\varphi$ has the properties of maximal
regularity we used before. The following result gives a description of
$\varphi^{H}$ in the convex case and will be important for our
intentions in this paper.



\begin{theorem}[{\cite[Theorem~2.9]{MR3465809}}]\label{thm:29}
  Assume that $\varphi : V\to (-\infty,+\infty]$ is convex, proper,
  lower semicontinuous and $j$-elliptic, and let
  $\varphi^{H} : H\to (-\infty,+\infty]$ be the function from
  Corollary~\ref{cor:27}. Then, there is a constant $c\in \R$ such
  that
  \begin{displaymath}
    \varphi^{H}(u) = c+\inf_{\hat{u}\in
      j^{-1}(\{u\})}\varphi(\hat{u})\qquad\text{for every $u\in H$}
  \end{displaymath}
  with effective domain $D(\varphi^{H}) = j(D(\varphi))$.
\end{theorem}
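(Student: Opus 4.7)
The plan is to define the candidate
\[
\psi(u) := \inf_{\hat{u}\in j^{-1}(\{u\})}\varphi(\hat{u}),\qquad u\in H,
\]
(with the convention $\inf\emptyset = +\infty$), show that $\psi$ is proper, convex and lower semicontinuous on $H$ with $D(\psi) = j(D(\varphi))$, and establish that the infimum is attained. I would then verify $\partial\psi = \partial_{j}\varphi$. Since $\varphi^{H}$ is also proper, convex, lower semicontinuous with $\partial\varphi^{H} = \partial_{j}\varphi$ by Theorem~\ref{cor:27}, the uniqueness up to a constant in that corollary then gives $\varphi^{H} = \psi + c$ for some $c\in\R$, from which the domain identity is immediate.

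For properness and convexity of $\psi$, the former reduces to nonemptiness of $j(D(\varphi))$, while the latter follows because any near-minimizers $\hat{u}_{i}\in j^{-1}(\{u_{i}\})$ satisfy $j((1-\lambda)\hat{u}_{0}+\lambda\hat{u}_{1}) = (1-\lambda)u_{0}+\lambda u_{1}$ with $\varphi$ convex on $V$. For lower semicontinuity, take $u_{n}\to u$ in $H$ with $\psi(u_{n})\to\alpha<\infty$ and pick $\hat{u}_{n}\in j^{-1}(\{u_{n}\})$ with $\varphi(\hat{u}_{n})\le\psi(u_{n})+1/n$. Since $\norm{j(\hat{u}_{n})}_{H}=\norm{u_{n}}_{H}$ is bounded, $\varphi_{\omega}(\hat{u}_{n})=\varphi(\hat{u}_{n})+\tfrac{\omega}{2}\norm{u_{n}}_{H}^{2}$ stays bounded, so by $j$-ellipticity the sequence $(\hat{u}_{n})$ lies in a weakly relatively compact subset of $V$. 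After extracting a subsequence $\hat{u}_{n_{k}}\rightharpoonup\hat{u}$ in $V$, weak-to-weak continuity of $j$ together with the strong convergence $j(\hat{u}_{n_{k}})=u_{n_{k}}\to u$ forces $j(\hat{u})=u$, and weak lower semicontinuity of the convex lower semicontinuous function $\varphi$ gives $\varphi(\hat{u})\le\liminf\varphi(\hat{u}_{n_{k}})\le\alpha$, hence $\psi(u)\le\alpha$. The same compactness argument applied to a minimizing sequence for fixed $u\in j(D(\varphi))$ shows that the infimum defining $\psi(u)$ is attained.

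For the subdifferential identity, suppose $(u,f)\in\partial_{j}\varphi$, so there exists $\hat{u}\in D(\varphi)$ with $j(\hat{u})=u$ and $\varphi(\hat{u}+\hat{v})-\varphi(\hat{u})\ge (f,j(\hat{v}))_{H}$ for all $\hat{v}\in V$. Testing with $\hat{v}\in\ker j$ yields $\varphi(\hat{u}+\hat{v})\ge\varphi(\hat{u})$ for every such $\hat{v}$, so $\hat{u}$ minimizes $\varphi$ on the affine subspace $\hat{u}+\ker j = j^{-1}(\{u\})$, and hence $\varphi(\hat{u})=\psi(u)$. For any $v\in H$ with $\psi(u+v)<\infty$ and any $\hat{w}\in j^{-1}(\{u+v\})$, setting $\hat{v}:=\hat{w}-\hat{u}$ gives $j(\hat{v})=v$ and $\varphi(\hat{w})-\psi(u)\ge (f,v)_{H}$; taking the infimum over $\hat{w}$ yields $(u,f)\in\partial\psi$. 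Conversely, if $(u,f)\in\partial\psi$, use attainment to pick $\hat{u}\in j^{-1}(\{u\})$ with $\varphi(\hat{u})=\psi(u)$; then for every $\hat{v}\in V$,
\[
\varphi(\hat{u}+\hat{v})\ge\psi(u+j(\hat{v}))\ge\psi(u)+(f,j(\hat{v}))_{H}=\varphi(\hat{u})+(f,j(\hat{v}))_{H},
\]
so $(u,f)\in\partial_{j}\varphi$.

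The main obstacle is the lower semicontinuity step: one has to lift a strongly convergent sequence in $H$ back to a weakly convergent sequence in $V$ across the possibly non-injective operator $j$, and this is exactly where the weak relative compactness of sublevel sets of $\varphi_{\omega}$ (the full content of $j$-ellipticity) combines with weak-to-weak continuity of $j$. Once $\partial\psi = \partial_{j}\varphi = \partial\varphi^{H}$ is in hand, both $\psi$ and $\varphi^{H}$ are proper, convex, lower semicontinuous functions on the Hilbert space $H$ with the same subdifferential, and the uniqueness-up-to-constant statement in Theorem~\ref{cor:27} yields $\varphi^{H} = \psi + c$ for a unique $c\in\R$; the identity $D(\varphi^{H})=D(\psi) = j(D(\varphi))$ then follows at once.
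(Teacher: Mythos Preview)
The paper does not contain a proof of this statement; Theorem~\ref{thm:29} is quoted verbatim as \cite[Theorem~2.9]{MR3465809} and used only as a black box in the proof of Lemma~\ref{lem:2}. There is therefore no in-paper argument to compare your proposal against.

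That said, your proposal is a sound and natural proof. The strategy---define the infimal projection $\psi$, verify it is proper, convex, and lower semicontinuous with attained infimum, check $\partial\psi=\partial_{j}\varphi$ directly, then invoke the uniqueness clause of Theorem~\ref{cor:27}---is exactly how such identification results are established, and each step you outline goes through. The observation that any $\hat{u}$ witnessing $(u,f)\in\partial_{j}\varphi$ already minimises $\varphi$ over the fibre $j^{-1}(\{u\})$ (by testing with $\hat{v}\in\ker j$) is precisely the point that makes the inclusion $\partial_{j}\varphi\subseteq\partial\psi$ clean, and your use of attainment for the reverse inclusion is correct.

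One technical caveat worth flagging: your lower-semicontinuity and attainment arguments extract a weakly convergent \emph{subsequence} from a weakly relatively compact set. In the stated generality ($V$ an arbitrary locally convex space) weak relative compactness need not imply weak sequential compactness, so strictly speaking one should either pass to nets or assume $V$ is normed so that Eberlein--\v{S}mulian applies. For attainment one can avoid sequences entirely: the set $j^{-1}(\{u\})\cap\{\varphi\le c\}$ is weakly closed (intersection of a weakly closed fibre and a closed convex sublevel set) and contained in the weakly relatively compact set $\{\varphi_{\omega}\le c+\tfrac{\omega}{2}\norm{u}_{H}^{2}\}$, hence weakly compact; the weakly lower semicontinuous convex function $\varphi$ then attains its minimum there. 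Since every application in the present paper has $V$ a reflexive Banach space, your sequential argument is fully rigorous in the cases that matter here.
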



For our perturbation result, we need the compactness of the sublevel
sets of $\varphi^{H}$. With the help of Theorem~\ref{thm:29} we can
establish a criterion in terms of the given $\varphi$ for this
property.

\begin{lemma}\label{lem:2}
  Let $\varphi : V\to (-\infty,+\infty]$ be proper, lower
  semicontinuous $j$-semiconvex, and $j$-elliptic. Assume that
  \begin{equation}
    \label{eq:25}
    \begin{cases}
      &\text{$j : V\to H$ maps weakly relatively compact sets of $V$}\\
      & \text{into relatively norm-compact sets of $H$,}
    \end{cases}
  \end{equation}
  then there is an $\omega\ge 0$ such that for every $c\in \R$, the sublevel set
  \begin{displaymath}
    E_{\omega\!,c}
    =\Big\{u\in H\,\Big\vert\;\varphi^{H}_{\omega}(u)\le
    c\Big\}\qquad\text{is compact in $H$.}
  \end{displaymath}
\end{lemma}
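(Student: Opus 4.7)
The plan is to reduce the compactness of the sublevel sets $E_{\omega,c}$ of $\varphi^{H}_{\omega}$ in $H$ to the weak relative compactness of the sublevel sets of $\varphi_{\omega}$ in $V$, by using the representation formula of Theorem~\ref{thm:29} and then invoking hypothesis~\eqref{eq:25}. I would fix $\omega\ge 0$ large enough to simultaneously witness the $j$-semiconvexity and $j$-ellipticity of $\varphi$, and to render $\varphi^{H}_{\omega}$ convex, which is possible since $\varphi^{H}$ is semiconvex by Corollary~\ref{cor:27}.

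\emph{Step 1 (Identification).} Since $\varphi_{\omega}$ is convex, proper, lower semicontinuous and $j$-elliptic, Theorem~\ref{thm:29} applied to $\varphi_{\omega}$ yields a constant $c_{1}\in\R$ with
\[(\varphi_{\omega})^{H}(u)=c_{1}+\inf_{\hat{u}\in j^{-1}(\{u\})}\varphi_{\omega}(\hat{u})\qquad\text{for every }u\in H.\]
A direct computation from characterization~(a) of $\partial_{j}\varphi$ in the excerpt gives $\partial_{j}\varphi_{\omega}=\partial_{j}\varphi+\omega I_{H}$, so that $\partial(\varphi_{\omega})^{H}=\partial_{j}\varphi_{\omega}=\partial\varphi^{H}+\omega I_{H}=\partial(\varphi^{H}_{\omega})$. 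Both $(\varphi_{\omega})^{H}$ and $\varphi^{H}_{\omega}$ are convex, proper and lower semicontinuous on $H$, so coincidence of their subdifferentials forces $\varphi^{H}_{\omega}=(\varphi_{\omega})^{H}+c_{0}$ for some constant $c_{0}\in\R$.

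\emph{Step 2 (Enclosure and compactness).} Combining Step~1 with the representation of $(\varphi_{\omega})^{H}$ yields
\[E_{\omega,c}=\Big\{u\in H : \inf_{\hat{u}\in j^{-1}(\{u\})}\varphi_{\omega}(\hat{u})\le c-c_{0}-c_{1}\Big\}.\]
Setting $c':=c-c_{0}-c_{1}+1$ and $F_{\omega,c'}:=\{\hat{u}\in V : \varphi_{\omega}(\hat{u})\le c'\}$, every $u\in E_{\omega,c}$ admits some $\hat{u}\in j^{-1}(\{u\})$ with $\varphi_{\omega}(\hat{u})\le c'$, whence $E_{\omega,c}\subseteq j(F_{\omega,c'})$. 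By $j$-ellipticity, $F_{\omega,c'}$ is weakly relatively compact in $V$; by hypothesis~\eqref{eq:25}, $j(F_{\omega,c'})$ is then relatively norm-compact in $H$. Finally, $\varphi^{H}_{\omega}$ is lower semicontinuous on $H$ (since $\varphi^{H}$ is lower semicontinuous and $u\mapsto\tfrac{\omega}{2}\lVert u\rVert_{H}^{2}$ is continuous), so $E_{\omega,c}$ is closed in $H$; as a closed subset of a relatively compact set, it is therefore compact.

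The only non-routine part is the identification in Step~1: Theorem~\ref{thm:29} provides a representation formula only in the convex case, while the hypotheses of the lemma concern the $j$-semiconvex function $\varphi$. Bridging the two objects $\varphi^{H}_{\omega}$ (shift, then descend to $H$) and $(\varphi_{\omega})^{H}$ (convexify, then descend to $H$) relies on the identity $\partial_{j}\varphi_{\omega}=\partial_{j}\varphi+\omega I_{H}$ together with the standard fact that two proper, convex, lower semicontinuous functions on $H$ with identical subdifferentials differ by a constant. Once this bridge is installed, Steps~2 and~3 amount to bookkeeping and a single appeal to~\eqref{eq:25}.
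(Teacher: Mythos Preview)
Your proof is correct and follows essentially the same route as the paper: both apply Theorem~\ref{thm:29} to the convex function $\varphi_{\omega}$ to obtain the infimum representation~\eqref{eq:7}, then lift elements of $E_{\omega,c}$ to a sublevel set of $\varphi_{\omega}$ in $V$ and invoke~\eqref{eq:25}. The only cosmetic differences are that you spell out the identification $\varphi^{H}_{\omega}=(\varphi_{\omega})^{H}+c_{0}$ via equality of subdifferentials (the paper absorbs this into the uniqueness clause of Corollary~\ref{cor:27}), and you conclude compactness via ``closed subset of a relatively compact set'' rather than the paper's equivalent sequential extraction.
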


\begin{remark}
  If $V$ is a normed space, then by the Eberlein-\v Smulian Theorem hypothesis~\eqref{eq:25} is
  equivalent to \emph{$j$ maps weakly convergent sequences in $V$ to norm
      convergent sequences in $H$}. This in turn is equivalent to $j$ being compact if $V$ is reflexive.
  
\end{remark}

\begin{proof}[Proof of Lemma~\ref{lem:2}]
  By hypothesis, there is an $\omega\ge 0$ such that
  $\varphi_{\omega}$ is convex, lower semicontinuous, and for every
  $c\in \R$, the sublevel sets 
  $\{\hat{u}\in V\,\vert\, \varphi_{\omega}(u) \le c\}$ are weakly
  relatively compact and closed. By Corollary~\ref{cor:27}, there is a lower
  semicontinuous, proper function
  $\varphi^{H} : H\to (-\infty,+\infty]$ such that
  $\varphi^{H}_{\omega}$ is convex and
  $\partial
  \varphi^{H}_{\omega}=\partial_{j}\varphi_{\omega}$. Applying
  Theorem~\ref{thm:29} to $\varphi_{\omega}$ and
  $\varphi^{H}_{\omega}$, we have that
\begin{equation}
  \label{eq:7}
    \varphi^{H}_{\omega}(u) = d+\inf_{\hat{u}\in
      j^{-1}(\{u\})}\varphi_{\omega}(\hat{u})\qquad\text{for every $u\in H$}
  \end{equation}
   and some constant $d\in \R$. For $c\in \R$, let $(u_{n})_{n\ge 1}$
   be an arbitrary sequence in $E_{\omega\!,c}$. 
   By~\eqref{eq:7}, for every $n\in \N$, there is a
   $\hat{u}_{n}\in j^{-1}(\{u_{n}\})$ such that
   \begin{displaymath}
     d+\varphi_{\omega}(\hat{u}_{n})\le c+ 1. 
   \end{displaymath}
   By hypothesis, all sublevel sets of $\varphi_{\omega}$ are weakly
   relatively compact in $V$. Thus, by our hypothesis, the image under
   $j$ is relatively compact in $H$. Consequently, there are a
   subsequence $(u_{n_{l}})_{l\ge 1}$ of $(u_{n})_{n\ge 1}$ and a
   $u\in H$ such that $u_{n_{l}}=j(\hat{u}_{n_{l}})\to u$ in $H$ as
   $l\to+\infty$. Since $\varphi^{H}_{\omega}(u_{n_{l}})\le c$ and
   since $\varphi^{H}$ is lower semicontinuous, it follows that
   $\varphi^{H}(u)\le c$. This shows that $E_{\omega\!,c}$ is compact.
\end{proof}

Now, applying Lemma~\ref{lem:2} to Theorem~\ref{thm:3}, we can state
the following existence theorem.

\begin{theorem}
  \label{thm:4}
  Let $\varphi : V\to (-\infty,+\infty]$ be proper, lower
  semicontinuous $j$-semiconvex, and $j$-elliptic. Assume that the
  mapping $j$ satisfies~\eqref{eq:25} and let $G : \He\to \He$ be
  a continuous mapping of sublinear growth~\eqref{eq:26}. Then, for
  $A=\partial_{j}\varphi$ the nonlinear evolution problem~\eqref{eq:6}
  admits for every $u_{0}\in \overline{j(D(\varphi))}$ and
  $f\in \He$ at least one solution
  $u\in H^{1}_{loc}((0,T];H)\cap C([0,T];H)$. In particular, one has that
  $\varphi\circ u$ belongs to $W^{1,1}_{loc}((0,T])\cap L^{1}(0,T)$ and
  inequality~\eqref{eq:14} holds. If
  $u_{0}\in D(\varphi)$, then problem~\eqref{eq:6} has a solution
  $u\in H^{1}(0,T;H)$. 
\end{theorem}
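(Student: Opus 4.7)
The plan is to reduce Theorem~\ref{thm:4} to the abstract Hilbert-space result Theorem~\ref{thm:3}, using the function $\varphi^{H}$ produced by Corollary~\ref{cor:27} as a bridge. First, since $\varphi$ is $j$-semiconvex, I fix $\omega\ge 0$ such that $\varphi_{\omega}$ is convex, proper, lower semicontinuous, and $j$-elliptic, and I apply Corollary~\ref{cor:27} (to $\varphi_{\omega}$, hence to $\varphi$ itself by absorbing the quadratic shift) to obtain a proper, lower semicontinuous, semiconvex function $\varphi^{H}: H\to(-\infty,+\infty]$ with
\begin{displaymath}
  A=\partial_{j}\varphi=\partial\varphi^{H}.
\end{displaymath}
In particular, the operator $A$ is governed on $H$ by the subdifferential of the semiconvex function $\varphi^{H}$, which is precisely the setting in which Theorem~\ref{thm:3} operates.

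Second, I would check the compact-sublevel-set hypothesis~\eqref{eq:4} for $\varphi^{H}$. This is exactly the content of Lemma~\ref{lem:2}: since $\varphi$ is $j$-semiconvex and $j$-elliptic, and since $j$ satisfies~\eqref{eq:25}, the lemma yields an $\omega\ge 0$ (the same one, after possibly enlarging it) such that $\{\varphi^{H}_{\omega}\le c\}$ is compact in $H$ for every $c\in\R$. Together with Step~1, all hypotheses of Theorem~\ref{thm:3} are now verified for the semiconvex function $\varphi^{H}$ on $H$, together with the given continuous perturbation $G:\He\to\He$ of sublinear growth~\eqref{eq:26}.

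Third, I would reconcile the initial-data conditions. By Theorem~\ref{thm:29} applied to $\varphi_{\omega}$ (which is convex, proper, lower semicontinuous, $j$-elliptic), together with the identity $D(\varphi^{H}_{\omega})=D(\varphi^{H})$ and $D(\varphi_{\omega})=D(\varphi)$, I obtain $D(\varphi^{H})=j(D(\varphi))$, and hence
\begin{displaymath}
  \overline{D(\varphi^{H})}=\overline{j(D(\varphi))}\quad\text{in }H.
\end{displaymath}
Thus the hypothesis $u_{0}\in\overline{j(D(\varphi))}$ in Theorem~\ref{thm:4} is exactly the hypothesis $u_{0}\in\overline{D(\varphi^{H})}$ required by Theorem~\ref{thm:3}, and the stronger hypothesis $u_{0}\in D(\varphi^{H})$ gives the upgraded regularity $u\in H^{1}(0,T;H)$.

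Finally, applying Theorem~\ref{thm:3} to $\varphi^{H}$, $A=\partial\varphi^{H}$, and $G$ delivers a solution $u\in H^{1}_{\loc}((0,T];H)\cap C([0,T];H)$ of~\eqref{eq:6}, together with the regularity $\varphi^{H}\circ u\in W^{1,1}_{\loc}((0,T])\cap L^{1}(0,T)$ and the a priori estimate~\eqref{eq:14}; the assertion for $\varphi\circ u$ is then read off from $\varphi^{H}$ via Theorem~\ref{thm:29}. There is no substantive obstacle beyond bookkeeping: the only point that requires care is making sure that the single constant $\omega$ simultaneously witnesses $j$-convexity of $\varphi_{\omega}$, compactness of the sublevel sets of $\varphi^{H}_{\omega}$ via Lemma~\ref{lem:2}, and the dissipativity inequality~\eqref{eq:3} used in the Schaefer-set bound inside the proof of Theorem~\ref{thm:3}, which is achieved by taking the maximum of the constants produced in Steps~1 and~2.
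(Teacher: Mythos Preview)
Your proposal is correct and follows exactly the route the paper intends: the paper presents Theorem~\ref{thm:4} as an immediate consequence of Lemma~\ref{lem:2} combined with Theorem~\ref{thm:3}, via the identification $A=\partial_{j}\varphi=\partial\varphi^{H}$ from Corollary~\ref{cor:27}, and you have spelled out precisely those reductions. Your care in identifying $\overline{D(\varphi^{H})}=\overline{j(D(\varphi))}$ through Theorem~\ref{thm:29} and in reading the regularity of $\varphi\circ u$ as that of $\varphi^{H}\circ u$ clarifies points the paper leaves implicit (indeed, the paper's phrasing ``$u_{0}\in D(\varphi)$'' and ``$\varphi\circ u$'' should be understood as ``$u_{0}\in D(\varphi^{H})$'' and ``$\varphi^{H}\circ u$'', exactly as you interpret them).
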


We complete this section by considering the following evolution problem
involving the \emph{Dirichlet-to-Neumann operator} associated with the
$p$-Laplacian (cf~\cite{MR3369257,MR3465809}).

\begin{example}
  Let $\Omega$ be a bounded domain with a Lipschitz continuous
  boundary $\partial\Omega$. Then, for $\frac{2d}{d+1}<p<+\infty$, the
  trace operator $\textrm{Tr} : W^{1,p}(\Omega)\to
  L^{2}(\partial\Omega)$ is a completely continuous operator
  (cf~\cite[Th\'eor\`eme~6.2]{MR0227584} for the case $p<d$, the other
  cases $p=d$ and $p>d$ can be deduced from \cite[Cons\'equence~6.2
  \&~6.3]{MR0227584}). Now, we take
  \begin{center}
    $V=W^{1,p}(\Omega)$, $H=L^{2}(\partial\Omega)$, and $j=\textrm{Tr}$.
  \end{center}
  Then, $j$ is a linear bounded mapping satisfying
  hypothesis~\eqref{eq:25}. In fact, $j$ is a prototype of a non-injective
  mapping. Furthermore, let $\varphi : V\to \R$ be the function given by
  \begin{displaymath}
    \varphi(\hat{u})=\tfrac{1}{p}\int_{\Omega}\abs{\nabla
      \hat{u}}^{p}\,\dx\qquad\text{for every $\hat{u}\in V$.}
  \end{displaymath}
  Then, $\varphi$ is continuously differentiable on $V$ and convex.
  Thus, the $\textrm{Tr}$-subdiffer\-ential operator
  $\partial_{\textrm{Tr}}\varphi$ is given by
  \begin{displaymath}
    \partial_{\textrm{Tr}}\varphi  = \Bigg\{(u,f)\in H\times H \;\Bigg\vert\;
      \begin{array}[c]{c}
        \exists \hat{u}\in V
        \text{ s.t. } \textrm{Tr}(\hat{u}) = u \text{ and for every }
        \hat{v}\in V\\[0,1cm]
        \int_{\Omega}\abs{\nabla \hat{u}}^{p-2}\nabla\hat{u}\nabla\hat{v}\,\dx=(f,j(\hat{v}))_H
      \end{array}
      \Bigg\}.
  \end{displaymath}
  Moreover, by inequality~\cite[(20)]{MR3369257}, for any $\omega>0$,
  the shifted function $\varphi_{\omega}$ has bounded level sets in
  $V$. Since $V$ is reflexive, every level set of $\varphi_{\omega}$
  is weakly compact in $V$. In addition,
  by~\cite[Lemma~2.1]{MR3369257}, $j(D(\varphi))$ is dense in $H$.
  
  Now, let $g : (0,T)\times \Omega\times \R\to \R$ be a
  Carath\'edory function with sublinear growth. Then by
  Theorem~\ref{thm:4}, for every $u_{0}\in L^{2}(\partial\Omega)$, there is at least one
  solution $u\in H^{1}_{loc}((0,T]; L^{2}(\partial\Omega))\cap C([0,T];L^{2}(\partial\Omega))$ of the
  elliptic-parabolic boundary-value problem
  \begin{displaymath}
    \begin{cases}
      \hspace{3,5cm}-\Delta_{p}\hat{u}(t,\cdot)=0 &\text{on
        $(0,T)\times \Omega$,}\\
      \partial_{t}u(t,\cdot)+ \abs{\nabla
        u(t,\cdot)}^{p-2}\tfrac{\partial}{\partial \nu}u(t,\cdot)=g(t,\cdot,u(t,\cdot)) & \text{on
      $(0,T)\times \partial\Omega$,}\\
    \hspace{2.15cm}\phantom{-\Delta_{p}u(t,\cdot)+}u(t,\cdot)=\hat{u}(t,\cdot) & \text{on
      $(0,T)\times \partial\Omega$,}\\
    \hspace{2,35cm}\phantom{-\Delta_{p}^{\! D}u(t,\cdot)}u(0,\cdot)=u_{0} &\text{on
      $\partial\Omega$.}
    \end{cases}
  \end{displaymath}
\end{example}

%
%

\appendix

\section{Br\'ezis' maximal $L^{2}$-regularity theorem}

To keep this paper self-contained, we show in this appendix that Br\'ezis' maximal
$L^2$-regularity result (Theorem~\ref{thm:2}) remains true for proper,
lower semicontinuous functions $\varphi : H\to (-\infty,+\infty]$,
which are \emph{semiconvex}.\medskip

Under the above hypotheses on $\varphi$, the subdifferential operator $A=\partial\varphi$ is
\emph{quasi maximal monotone}. Note that an operator $A$ on $H$
is called maximal monotone if firstly, $A$ is \emph{monotone}, that is,
\begin{displaymath}
  (v_{1}-v_{2},u_{1}-u_{2})_{H}\ge 0\qquad\text{for all
    $(u_{1},v_{1})$, $(u_{2},v_{2})\in A$,}
\end{displaymath}
and secondly, $A$ satisfies the \emph{range condition}
\begin{displaymath}
  \textrm{Rg}(I_{H}+\lambda A)=H\qquad\text{for one (or, equivalently for
    all) $\lambda>0$.}
\end{displaymath}
Now, an operator $A$ is called
\emph{quasi maximal monotone} if there is and $\omega\in \R$ such that $\omega
I_{H}+A$ is maximal monotone.

One important property of the class of maximal monotone operators in Hilbert
spaces is that their graph is closed in $H\times H_{w}$, where
$H_{w}$ means that $H$ is equipped with the weak topology $\sigma(H^{\ast},H)$.

\begin{proposition}[{\cite[Proposition~2.5]{MR0348562}}]
  \label{prop:A1}
  Let $A$ be an maximal monotone operator, $((u_{n},v_{n}))_{n\ge 1}\subseteq A$, $u$,
  $v\in H$ such that $u_{n}\rightharpoonup u$ and
  $v_{n}\rightharpoonup v$ weakly in $H$ as $n\to+\infty$ and
  $\limsup_{n\to+\infty}(u_{n},v_{n})_{H}\le (u,v)_{H}$. Then
  $(u,v)\in A$ and $(u_{n},v_{n})_{H}\to (u,v)_{H}$ as $n\to +\infty$.
\end{proposition}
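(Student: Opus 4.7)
The plan is to exploit the two structural features of maximal monotone operators: monotonicity itself, and the characterization that $(u,v)\in A$ if and only if $(u-x,v-y)_{H}\ge 0$ for every $(x,y)\in A$ (this is the defining property of maximality, since any strict monotone extension would contradict maximality). Both the membership claim and the strong convergence of inner products will fall out of careful bookkeeping with these two facts together with the hypothesis $\limsup_{n}(u_{n},v_{n})_{H}\le (u,v)_{H}$.

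First I would prove the membership $(u,v)\in A$. Fix an arbitrary $(x,y)\in A$. By monotonicity applied to $(u_{n},v_{n})$ and $(x,y)$,
\begin{displaymath}
  (u_{n}-x,v_{n}-y)_{H}=(u_{n},v_{n})_{H}-(u_{n},y)_{H}-(x,v_{n})_{H}+(x,y)_{H}\ge 0
\end{displaymath}
for every $n\ge 1$. The weak convergences $u_{n}\rightharpoonup u$ and $v_{n}\rightharpoonup v$ give $(u_{n},y)_{H}\to (u,y)_{H}$ and $(x,v_{n})_{H}\to (x,v)_{H}$. Taking $\limsup_{n}$ in the displayed inequality and inserting the standing hypothesis on $\limsup_{n}(u_{n},v_{n})_{H}$ yields $(u-x,v-y)_{H}\ge 0$. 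Since $(x,y)\in A$ was arbitrary, maximality of $A$ forces $(u,v)\in A$.

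Next I would derive the convergence $(u_{n},v_{n})_{H}\to (u,v)_{H}$. Now that $(u,v)\in A$ is available, apply monotonicity once more, this time between $(u_{n},v_{n})$ and $(u,v)$ itself, to get
\begin{displaymath}
  (u_{n},v_{n})_{H}\ge (u_{n},v)_{H}+(u,v_{n})_{H}-(u,v)_{H}.
\end{displaymath}
Letting $n\to\infty$ and again invoking the weak convergences on the right-hand side delivers $\liminf_{n}(u_{n},v_{n})_{H}\ge (u,v)_{H}$, which together with the assumed reverse $\limsup$ inequality produces $\lim_{n}(u_{n},v_{n})_{H}=(u,v)_{H}$.

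The only subtlety is the initial appeal to the maximality characterization; once that is in hand the rest is a routine weak-limit computation. No compactness or reflexivity arguments beyond weak convergence of scalar products against fixed vectors are needed, and the hypothesis on $\limsup (u_{n},v_{n})_{H}$ is used exactly once, precisely where the monotonicity inequality must be turned into a statement free of the troublesome product term.
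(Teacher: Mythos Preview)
The paper does not supply its own proof of this proposition; it is quoted as \cite[Proposition~2.5]{MR0348562} and used as a black box in the appendix. Your argument is correct and is in fact the standard proof. One small correction to your closing commentary: the $\limsup$ hypothesis is invoked twice, not once---first to obtain $(u-x,v-y)_{H}\ge 0$ for arbitrary $(x,y)\in A$, and then again at the very end when you combine it with the $\liminf$ inequality to conclude convergence of the inner products.
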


For the class of $\omega$-quasi maximal monotone operators in Hilbert
spaces the following existence and regularity result holds. Here, we
recall~\cite[Theorem~4.5]{MR2582280} in the Hilbert spaces framework
and note that in Hilbert spaces \emph{monotone operators} are
\emph{accretive} and vice versa.

\begin{theorem}[{\bfseries Existence \& regularity for smooth $f$}]
  \label{thm:A1}
  Let $A$ be an $\omega$-quasi maximal monotone operator for some
  $\omega\in \R$, $f\in W^{1,1}(0,T;H)$, $u_{0}\in D(A)$. Then there
  is a unique solution $u\in W^{1,\infty}(0,T;H)$ of problem~\eqref{eq:5}.
\end{theorem}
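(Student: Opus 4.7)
The plan is to proceed by Yosida approximation after first reducing to the maximal monotone case. Since $A$ is $\omega$-quasi maximal monotone, $B := \omega I_H + A$ is maximal monotone and problem~\eqref{eq:5} is equivalent to $\dot u + Bu \ni f + \omega u$ with $u(0) = u_0 \in D(B) = D(A)$. For each $\lambda > 0$, let $J_\lambda := (I_H + \lambda B)^{-1}$ be the resolvent and $B_\lambda := \lambda^{-1}(I_H - J_\lambda)$ the Yosida approximation of $B$. Since $B_\lambda$ is single-valued and globally Lipschitz on $H$, the Cauchy-Lipschitz theorem produces a unique global $u_\lambda \in W^{1,\infty}(0,T;H)$ solving the Lipschitz ODE $\dot u_\lambda + B_\lambda u_\lambda = f + \omega u_\lambda$ with $u_\lambda(0) = u_0$.

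The central step is to obtain a uniform $L^\infty$-bound on $\dot u_\lambda$. Using difference quotients in time (legal since $B_\lambda$ is Lipschitz) together with the monotonicity of $B_\lambda$, I would derive
\begin{displaymath}
  \tfrac{1}{2}\tfrac{\td}{\dt}\norm{\dot u_\lambda(t)}_H^{2} \le \omega\,\norm{\dot u_\lambda(t)}_H^{2} + (\dot f(t),\dot u_\lambda(t))_H,
\end{displaymath}
which via Gronwall is controlled in terms of $\norm{\dot u_\lambda(0)}_H$ and $\norm{\dot f}_{L^1(0,T;H)}$. Evaluating the ODE at $t=0$ gives $\dot u_\lambda(0) = -B_\lambda u_0 + \omega u_0 + f(0)$, and since $u_0 \in D(B)$ the classical estimate $\norm{B_\lambda u_0}_H \le \norm{B^{\circ} u_0}_H$ (with $B^{\circ} u_0$ the minimal-selection in $B u_0$) controls this initial value independently of $\lambda$.

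For the passage to the limit, strong convergence of $(u_\lambda)$ in $C([0,T];H)$ follows from the standard monotonicity argument: writing $u_\lambda = J_\lambda u_\lambda + \lambda B_\lambda u_\lambda$, expanding $\tfrac{\td}{\dt}\tfrac{1}{2}\norm{u_\lambda - u_\mu}_H^{2}$ and combining monotonicity of $B$ with the uniform $L^\infty$-bound on $B_\lambda u_\lambda$ yields a Cauchy estimate of order $\lambda + \mu$. By weak-$\ast$ compactness the limit $u$ lies in $W^{1,\infty}(0,T;H)$. Since $(J_\lambda u_\lambda(t), B_\lambda u_\lambda(t))$ sits in the graph of $B$ for a.e.\ $t$, with $J_\lambda u_\lambda \to u$ strongly (because $\norm{u_\lambda - J_\lambda u_\lambda}_H = \lambda\norm{B_\lambda u_\lambda}_H \to 0$) and $B_\lambda u_\lambda = f + \omega u_\lambda - \dot u_\lambda \rightharpoonup f + \omega u - \dot u$ weakly in $\He$, Proposition~\ref{prop:A1}, applied pointwise after passing to a subsequence, forces $\dot u(t) + A u(t) \ni f(t)$ a.e.\ on $(0,T)$. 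Uniqueness is immediate from Lemma~\ref{lem:1}, whose proof uses only the $\omega$-monotonicity of $A$.

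The hard part will be the uniform $L^\infty$-bound on $\dot u_\lambda$: it depends delicately on the $W^{1,1}$-regularity of $f$ (supplying the integrable inhomogeneity in the Gronwall step) and on $u_0 \in D(A)$ (controlling $\norm{\dot u_\lambda(0)}_H$ uniformly in $\lambda$). Weakening either hypothesis costs the sharp $W^{1,\infty}$-regularity, which is precisely why Theorem~\ref{thm:2} yields only $H^{1}_{\mathrm{loc}}((0,T];H)$-solutions for merely $L^2$ data and initial values in $\overline{D(\varphi)}$.
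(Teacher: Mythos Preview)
The paper does not actually prove Theorem~\ref{thm:A1}; it is quoted from \cite[Theorem~4.5]{MR2582280} (Barbu), with the remark that in Hilbert space monotone and accretive coincide. Your outline is precisely the standard Yosida-approximation argument one finds there, and it is correct in substance: the reduction to $B=\omega I_H+A$, the Lipschitz ODE for $u_\lambda$, the difference-quotient/Gronwall bound on $\dot u_\lambda$ driven by $\norm{\dot f}_{L^1}$ and $\norm{B^{\circ}u_0}_H$, the Cauchy estimate of order $\lambda+\mu$, and uniqueness via Lemma~\ref{lem:1} are all as in Barbu.

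One technical slip worth flagging: invoking Proposition~\ref{prop:A1} ``pointwise after passing to a subsequence'' does not quite work, because weak convergence of $B_\lambda u_\lambda$ in $\He$ does not give weak convergence of $B_\lambda u_\lambda(t)$ in $H$ for a.e.\ $t$, even along subsequences. The clean fix---and this is exactly what the paper does in step~(c) of the proof of Theorem~\ref{thm:2}---is to lift $B$ to the maximal monotone operator $\mathcal{B}$ on $\He$ and apply Proposition~\ref{prop:A1} once in $\He$: since $J_\lambda u_\lambda\to u$ strongly and $B_\lambda u_\lambda\rightharpoonup f+\omega u-\dot u$ weakly in $\He$, the $\limsup$ condition is automatic and you conclude $(u,f+\omega u-\dot u)\in\mathcal{B}$, i.e.\ $f(t)-\dot u(t)\in Au(t)$ for a.e.\ $t$.
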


Further, since $\partial\varphi_{\omega}=A+\omega I_{H}$ has dense
domain in $\overline{D(\varphi)}$ by~\cite[Proposition~2.11]{MR0348562} (or
\cite[p.48]{MR2582280}), the domain $D(A)$ of the subdifferential operator
$A=\partial\varphi$ is dense in $\overline{D(\varphi)}$. For later
use, we fix this observation in the next proposition.

\begin{proposition}
  \label{prop:A2}
  Let $\varphi : H\to (-\infty,+\infty]$ be proper, semiconvex, and
  lower semicontinuous. Then the domain $D(A)$ of $A=\partial\varphi$ is dense
  in $\overline{D(\varphi)}$.
\end{proposition}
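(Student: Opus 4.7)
The plan is to reduce immediately to the classical convex case by passing to the shifted function $\varphi_{\omega}$, and to verify that the density statement for $\partial\varphi_{\omega}$ transfers back to $A=\partial\varphi$. Concretely, I fix $\omega\in\R$ such that $\varphi_{\omega}$ is convex. Because the quadratic shift $u\mapsto\tfrac{\omega}{2}\|u\|_H^{2}$ is continuous on $H$, the lower semicontinuity of $\varphi$ is preserved, so $\varphi_{\omega}:H\to(-\infty,+\infty]$ is proper, convex, and lower semicontinuous. Moreover $D(\varphi_{\omega})=D(\varphi)$, so a fortiori $\overline{D(\varphi_{\omega})}=\overline{D(\varphi)}$.

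Next I would identify the two subdifferentials. Using the characterization of $\partial\varphi$ given in the paper for semiconvex $\varphi$, namely
\begin{displaymath}
\partial\varphi=\Big\{(u,h)\in H\times H\;\Big\vert\;\varphi_{\omega}(u+v)-\varphi_{\omega}(u)\ge (h+\omega u,v)_{H}\;\forall v\in D(\varphi)\Big\},
\end{displaymath}
together with the usual subdifferential of the convex function $\varphi_{\omega}$, I obtain at once that $(u,h)\in\partial\varphi$ if and only if $(u,h+\omega u)\in\partial\varphi_{\omega}$. Hence $\partial\varphi_{\omega}=A+\omega I_{H}$, and in particular
\begin{displaymath}
D(\partial\varphi_{\omega})=D(A).
\end{displaymath}

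Finally, I invoke Br\'ezis' classical density theorem for subdifferentials of proper, convex, lower semicontinuous functions (\cite[Proposition~2.11]{MR0348562} or \cite[p.~48]{MR2582280}), applied to $\varphi_{\omega}$: it yields $\overline{D(\partial\varphi_{\omega})}=\overline{D(\varphi_{\omega})}$. Combining this with the two identifications $D(\partial\varphi_{\omega})=D(A)$ and $D(\varphi_{\omega})=D(\varphi)$, I conclude that $\overline{D(A)}=\overline{D(\varphi)}$, which is the asserted density.

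No step here is really a genuine obstacle; the only point one must be careful about is the subdifferential identity $\partial\varphi_{\omega}=A+\omega I_H$, which requires invoking the semiconvex characterization of $\partial\varphi$ stated earlier in the paper rather than treating $\partial\varphi$ through its Dini-derivative definition. Once this identification is in place, everything else is a direct quotation of the convex density result, so the statement follows with essentially no extra work.
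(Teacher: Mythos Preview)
Your proof is correct and follows essentially the same approach as the paper: the paper itself records, in the sentence immediately preceding the proposition, that $\partial\varphi_{\omega}=A+\omega I_{H}$ has dense domain in $\overline{D(\varphi)}$ by \cite[Proposition~2.11]{MR0348562}, and then states the proposition as a summary of this observation. Your write-up simply makes explicit the identifications $D(\varphi_{\omega})=D(\varphi)$ and $D(\partial\varphi_{\omega})=D(A)$ that the paper leaves implicit.
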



We also need the following chain rule for convex functions $\varphi$.

\begin{lemma}[{\cite[Lemma~3.3]{MR0348562}}]
  \label{lem:A1}
  Let $\varphi : H\to (-\infty,+\infty]$ be proper, convex, and
  lower semicontinuous, and $u\in H^{1}(0,T;H)$.
  Assume, there is a $g\in \He$ such that $(u(t),g(t))\in \partial\varphi$ for
  a.e. $t\in (0,T)$. Then $\varphi\circ u$ is absolutely continuous on
  $[0,T]$ and 
  \begin{displaymath}
    \frac{\td}{\dt}\varphi(u(t))=(g(t),u(t))_{H}\qquad
    \text{
      for a.e. $t\in (0,T)$.}
  \end{displaymath}
\end{lemma}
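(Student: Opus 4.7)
Set $\alpha(t) := \varphi(u(t))$. The plan is to extract from the convex subdifferential relation a two-sided control of the increments of $\alpha$, from which both the absolute continuity and the derivative formula emerge via a distributional argument. (Note: the right-hand side of the stated formula should be read as $(g(t), \dot u(t))_H$; this is the classical form of the chain rule for convex $\varphi$.)

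First I would exploit that, for convex $\varphi$, the subdifferential relation $(w, h) \in \partial\varphi$ is equivalent to $\varphi(v) \geq \varphi(w) + (h, v - w)_H$ for all $v \in H$. Applied at two instants $s, t$ in the full-measure set where $(u(\tau), g(\tau)) \in \partial\varphi$---once with $(w, h) = (u(s), g(s))$, $v = u(t)$, and once with the roles reversed---this yields
\begin{equation*}
(g(s), u(t) - u(s))_H \leq \alpha(t) - \alpha(s) \leq (g(t), u(t) - u(s))_H.
\end{equation*}
This already forces $\alpha$ to be bounded below by a continuous function on $[0, T]$ and dominated above by an $L^1$ function (via $\|g\|_{L^2(0,T;H)}$ and $\|u\|_{C([0,T]; H)}$), so $\alpha \in L^1(0, T)$.

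Next I would identify the distributional derivative of $\alpha$. For $h > 0$ small and any $\phi \in C_c^{\infty}((0, T))$ with $\phi \geq 0$, the inequality above (with $s$ replaced by $t$ and $t$ by $t + h$) integrated against $\phi(t)$ yields
\begin{equation*}
\int_0^T \bigl(g(t), \tfrac{u(t+h) - u(t)}{h}\bigr)_H \phi(t)\,\dt \leq \int_0^T \tfrac{\alpha(t+h) - \alpha(t)}{h}\, \phi(t)\,\dt \leq \int_0^T \bigl(g(t+h), \tfrac{u(t+h) - u(t)}{h}\bigr)_H \phi(t)\,\dt.
\end{equation*}
Letting $h \downarrow 0$, the middle expression tends to $-\int_0^T \alpha(t) \phi'(t)\,\dt$ by discrete integration by parts, while the outer ones both tend to $\int_0^T (g(t), \dot u(t))_H \phi(t)\,\dt$ using the $L^2$-continuity of Bochner translation for $g$ and the convergence of difference quotients of $u \in H^1(0, T; H)$ to $\dot u$ in $L^2$. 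Hence $\alpha' = (g, \dot u)_H$ in $\mathcal{D}'((0, T))$, and since $(g, \dot u)_H \in L^1(0, T)$ by Cauchy--Schwarz, the standard characterization of $W^{1,1}$ functions on an interval shows that $\alpha$ coincides, up to modification on a null set, with an absolutely continuous function on $[0, T]$ with classical derivative $(g(t), \dot u(t))_H$ almost everywhere.

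The main obstacle is precisely the $L^1$-convergence of the two outer integrands: because $g$ is only assumed in $L^2(0, T; H)$, pointwise convergence of $g(\cdot + h)$ to $g(\cdot)$ is unavailable, so one cannot naively divide the first displayed inequality by $t - s$ and pass to a limit. The remedy is to stay at the integrated level and combine $L^2$-continuity of translation with the $L^2$-convergence of difference quotients, which makes the bilinear expressions converge in $L^1$. Once this convergence is secured the distributional identification is immediate, and everything else is a routine consequence.
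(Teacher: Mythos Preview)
The paper does not supply its own proof of this lemma; it merely cites Br\'ezis~\cite[Lemma~3.3]{MR0348562}. Your argument is essentially the classical one found there: the two-sided subgradient inequality
\[
(g(s), u(t)-u(s))_H \le \varphi(u(t))-\varphi(u(s)) \le (g(t), u(t)-u(s))_H
\]
combined with the $L^2$-convergence of difference quotients of $u$ and $L^2$-continuity of translation for $g$ identifies the distributional derivative of $\varphi\circ u$ as $(g,\dot u)_H\in L^1(0,T)$. You are also right that the displayed formula in the statement contains a typo and should read $(g(t),\dot u(t))_H$.

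One small point you should tighten: you conclude only that $\varphi\circ u$ agrees \emph{almost everywhere} with an absolutely continuous function $\tilde\alpha$, whereas the lemma asserts that $\varphi\circ u$ itself is absolutely continuous on $[0,T]$. To close this, use that $u\in C([0,T];H)$ and $\varphi$ is lower semicontinuous to get $\varphi(u(t))\le\liminf_{s\to t}\varphi(u(s))=\tilde\alpha(t)$ for every $t$; for the reverse inequality, pick $s_n\to t$ in the good set with $\|g(s_n)\|_H$ bounded (possible since $g\in L^2$), and from $\varphi(u(t))\ge\varphi(u(s_n))+(g(s_n),u(t)-u(s_n))_H$ deduce $\varphi(u(t))\ge\tilde\alpha(t)$. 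This gives $\varphi\circ u=\tilde\alpha$ everywhere on $[0,T]$.
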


Note, we may always assume
without loss of generality that $0\in D(\partial\varphi_{\omega})$,
$\varphi_{\omega}$ attains a minimum at $0$ (that is, \eqref{eq:3}
holds), and $\varphi_{\omega}(0)=0$. Otherwise, one chooses any
$(u_{0},v_{0})\in \partial\varphi_{\omega}$ and replaces $\varphi$ by
\begin{displaymath}
  \tilde{\varphi}(u):=\varphi(u+u_{0})-\varphi_{\omega}(u_{0})-(v_{0}-\omega
  u_{0},u)_{H}
  \qquad\text{for every $u\in H$.}
\end{displaymath}
Then,
\begin{displaymath}
  \tilde{\varphi}_{\omega}(u)=\varphi_{\omega}(u+u_{0})-\varphi_{\omega}(u_{0})-(v_{0},u)_{H}
  \qquad\text{for every $u\in H$,}
\end{displaymath}
$\varphi_{\omega}\ge 0$, $0\in D(\tilde{\varphi})$, and
$\tilde{\varphi}_{\omega}(0)=0$. Moreover, for each solution $y$ of inclusion
\begin{displaymath}
  \dot{y}(t)+\partial\tilde{\varphi}(y(t))\ni f(t)-v_{0}+\omega
  u_{0}\qquad\text{on $(0,T)$,}
\end{displaymath}
the function $u(t):=y(t)+u_{0}$ is a solution of~\eqref{eq:8}. This
shows that there is no loss of generality by assuming that for
$\varphi_{\omega}$, inequality~\eqref{eq:3} holds and
$\varphi_{\omega}\ge 0$.\medskip

With this, we can now outline the proof of Br\'ezis' $L^2$-maximal
regularity result. 

\begin{proof}[Proof of Theorem~\ref{thm:2}]
  Let $f\in \He$, $u_{0}\in D(\varphi)$, $f_{n}\in
  H^{1}(0,T;H)$ such that $f_{n}\to f$ in $\He$. Moreover, for every
  $n\ge 1$, there are $u_{n}^{(0)}\in D(A)$ such that 
  \begin{equation}
    \label{eq:18}
    \varphi_{\omega}(u_{n}^{(0)})\le \varphi_{\omega}(u_{0})
  \end{equation}
  and $u_{n}^{(0)}\to u_{0}$ in $H$ (see the last paragraph on~\cite[p.161]{MR2582280}). By
  Theorem~\ref{thm:A1}, there is a unique solution
  $u_{n}\in W^{1,\infty}(0,T;H)$ of problem
  \begin{displaymath}
   \begin{cases}
    \dot{u}_{n}+Au_{n}\ni f_{n} & \text{on $(0,T)$,}\\
    \hspace{17pt}u_{n}(0)=u_{n}^{(0)}.& 
   \end{cases}
  \end{displaymath}
 Then, by Lemma~\ref{lem:1}, $(u_{n})_{n\ge 1}$ is a Cauchy sequence
 in $C([0,T];H)$. Hence there is a $u\in C([0,T];H)$ such that
 $u_{n}\to u$ in $C([0,T];H)$. In particular, $u(0)=u_{0}$.\medskip

 (a) We show that $u$ satisfies~\eqref{eq:14}.
 Adding $\omega u_{n}$ on both sides of
 \begin{equation}
   \label{eq:17}
   \dot{u}_{n}+Au_{n}\ni f_{n}
 \end{equation}
 and then multiplying the resulting inclusion by $u_{n}$ yields
 \begin{displaymath}
   \tfrac{\td}{\dt}\tfrac{1}{2}\norm{u_{n}(t)}_{H}^{2}+(h+\omega
   u_{n}(t),u_{n}(t))_{H}=(f_{n}(t)+\omega u_{n}(t),u_{n}(t))_{H}
 \end{displaymath}
 for every $h\in Au_{n}(t)$ for a.e. $t\in
 (0,T)$. Applying~\eqref{eq:3}, and then integrating over $(0,t)$, for
 $t\in (0,T]$ leads to
 \begin{displaymath}
   \tfrac{1}{2}\norm{u_{n}(t)}_{H}^{2}\le
   \tfrac{1}{2}\norm{u_{n}^{(0)}}_{H}^{2}+\int_{0}^{t}\tfrac{1}{2}\norm{f_{n}(s)}_{H}^{2}\ds
   +(1+2\omega)\, \int_{0}^{t}\tfrac{1}{2}\norm{u_{n}(s)}_{H}^{2}\ds.
 \end{displaymath}
Now, the Gronwall inequality gives that $u_{n}$ satisfies the uniform
 bound~\eqref{eq:13} and by letting $n\to +\infty$ using that
 $u_{n}\to u$ in $C([0,T];H)$, we have that $u$
 satisfies~\eqref{eq:13}.\medskip

 (b) Next, we show that $u\in H^{1}(0,T;H)$. First, we add $\omega u_{n}$ on
 both sides of~\eqref{eq:17}, and then multiply the resulting
 inclusion by $\dot{u}_{n}$. Now, by Lemma~\ref{lem:A1},
 \begin{displaymath}
   \norm{\dot{u}_{n}(t)}_{H}^{2}+\tfrac{\td}{\dt}\varphi_{\omega}(u_{n}(t))=
   (f_{n}(t)+\omega u_{n}(t),\dot{u}_{n}(t))_{H}
 \end{displaymath}
 for a.e. $t\in (0,T)$. From this and by~\eqref{eq:18}, one deduces that
 \begin{align*} 
   &\tfrac{1}{2}\int_{0}^{t}\norm{\dot{u}_{n}(s)}_{H}^{2}\,\ds+\varphi_{\omega}(u_{n}(t))\\
   &\hspace{2cm}\le \varphi_{\omega}(u_{0})+
   \tfrac{1}{2}\int_{0}^{T}\norm{f_{n}(s)}_{H}^{2}\ds+\tfrac{\omega}{2}\norm{u_{n}(t)}_{H}^{2}
   -\tfrac{\omega}{2}\norm{u_{n}^{(0)}}_{H}^{2}.
 \end{align*}
 Note that $\varphi_{\omega}$ is bounded from below by an affine
 function. Thus and by part~(a), $(\dot{u}_{n})_{n\ge 1}$ is bounded
 in $\He$. Since $\He$ is reflexive,
 $(\dot{u}_{n})_{n\ge 1}$ admits a weakly convergent subsequence in
 $\He$. From this, by the limit $u_{n}\to u$ in $C([0,T];H)$,
 we can conclude that $u\in H^{1}(0,T;H)$. Moreover, by the lower
 semicontinuity of $\varphi_{\omega}$, one see that $u$ satisfies
 \begin{displaymath} 
   \tfrac{1}{2}\int_{0}^{t}\norm{\dot{u}(s)}_{H}^{2}\,\ds+\varphi_{\omega}(u(t))\le \varphi_{\omega}(u_{0})+
   \tfrac{1}{2}\int_{0}^{t}\norm{f(s)}_{H}^{2}\ds+\tfrac{\omega}{2}\norm{u(t)}_{H}^{2}
   -\tfrac{\omega}{2}\norm{u_{0}}_{H}^{2}
 \end{displaymath}
for every $t\in (0,T]$, which is equivalent to
 \begin{displaymath}
   \tfrac{1}{2}\int_{0}^{t}\norm{\dot{u}(s)}_{H}^{2}\,\ds+\varphi(u(t))\le \varphi(u_{0})
   +\tfrac{1}{2}\int_{0}^{t}\norm{f(s)}_{H}^{2}\ds.
 \end{displaymath}

(c) We conclude showing that $u$ is a solution of the evolution
problem~\eqref{eq:8}. For this, we use the lifted operator
$\mathcal{A}$ in $\He$ given by
\begin{displaymath}
  \mathcal{A}=\Big\{(u,v)\in \He\times\He\,\Big\vert\,v(t)\in
  Au(t)\text{ for a.e. }t\in (0,T)\Big\}.
\end{displaymath}
Since $\omega I_{H}+A=\partial\varphi_{\omega}$ is maximal monotone on $H$, we have that
$\mathcal{A}_{\omega}:=\omega\mathcal{I}_{\He}+\mathcal{A}$ is maximal monotone on
$\He$ (see~\cite[Exemple~2.3.3]{MR0348562}). Moreover, $u_{n}\to u$ in
$\He$, and after having chosen a subsequence,
$v_{n}:=f_{n}+\omega u_{n}-\dot{u}_{n}\rightharpoonup v:=f+\omega
u-\dot{u}$ weakly in $\He$. Thus, by Proposition~\ref{prop:A1}, $u\in
D(\mathcal{A})$ and $v\in \mathcal{A}_{\omega}u$, this is equivalent
to $u(t)\in D(A)$ and $f(t)-\dot{u}(t)\in Au(t)$ for a.e. $t\in
(0,T)$.\medskip

(d) Next, let $f\in \He$, $u_{0}\in \overline{D(\varphi)}$,
and $u_{n}^{(0)}\in D(\varphi)$ such that $u_{n}^{(0)}\to u_{0}$ in
$H$. By the previous part, for every $n\ge 1$, there are solutions
$u_{n}\in H^{1}(0,T;H)$ of problem
\begin{displaymath}
   \begin{cases}
    \dot{u}_{n}+Au_{n}\ni f & \text{on $(0,T)$,}\\
    \hspace{17pt}u_{n}(0)=u_{n}^{(0)}.& 
   \end{cases}
  \end{displaymath}
By Lemma~\ref{lem:1}, $(u_{n})_{n\ge 1}$ is a Cauchy sequence in
$C([0,T];H)$ and so, there is a $u\in C([0,T];H)$ such that $u_{n}\to
u$ in $C([0,T];H)$ as $n\to+\infty$. Moreover, by the same argument as in
part~(a), one sees that each $u_{n}$ and $u$
satisfies~\eqref{eq:13}.\medskip 

(e) Next, we show that 
\begin{equation}
  \label{eq:19}
  \begin{split}
  \int_{0}^{T}\varphi(u_{n}(s))\,\ds&\le
    \tfrac{1}{2}\norm{f}^{2}_{\He}+\tfrac{1+\omega}{2}
    \norm{u_{n}}^{2}_{\He} +\tfrac{1}{2}\norm{u_{n}^{(0)}}^{2}_{H}.
  \end{split}
\end{equation}
Since $f(t)-\dot{u}_{n}(t)\in \partial\varphi(u_{n}(t))$, it follows from
the definition of $A=\partial\varphi$ that
\begin{displaymath}
  \varphi_{\omega}(v)-\varphi_{\omega}(u_{n}(t))\ge ((f(t)-\dot{u}_{n}(t))+\omega
    u_{n}(t),v-u_{n}(t))_{H}
\end{displaymath}
for every $v\in H$ and a.e. $t\in (0,T)$. Thus taking $v=0$ and using
that $\varphi_{\omega}\ge 0$, one sees that
\begin{align*}
   0 \le \varphi_{\omega}(u_{n}(t))&\le -((f(t)-\dot{u}_{n}(t))+\omega
    u_{n}(t),-u_{n}(t))_{H}\\
    &=(f(t),u_{n}(t))_{H}- (\dot{u}_{n}(t),u_{n}(t))_{H}+\omega
    \norm{u_{n}(t)}_{H}^{2}\\
    &\le\tfrac{1}{2}\norm{f(t)}^{2}_{H}+(\tfrac{1}{2}+\omega)\norm{u_{n}(t)}^{2}_{H}
      -\tfrac{\td}{\dt}\tfrac{1}{2}\norm{u_{n}(t)}^{2}_{H}
  \end{align*}
  for a.e. $t\in (0,T)$. Integrating over
  $(0,T)$, one sees that
  \begin{equation}
    \label{eq:20}
    \begin{split}
      0\le \int_{0}^{T}\varphi_{\omega}(u_{n}(s))\,\ds&\le
      \tfrac{1}{2}\norm{f}^{2}_{\He} +\tfrac{1+2\omega}{2}\norm{u_{n}}^{2}_{\He} \\
      &\qquad\qquad
      -\tfrac{1}{2}\norm{u_{n}(t)}^{2}_{H}+\tfrac{1}{2}\norm{u_{n}^{(0)}}^{2}_{H}.
    \end{split}
  \end{equation}
  From this, it follows that \eqref{eq:19} holds. Then, since $u_{n}\to u$ in $C([0,T];H)$ and
$\varphi_{\omega}(u_{n})\ge 0$, it follows from the lower
semicontinuity of $\varphi_{\omega}$ and by Fatou's lemma
that~\eqref{eq:20} holds for $u$ and hence, $\varphi\circ u\in L^{1}(0,T)$
satisfying~\eqref{eq:21}.\medskip

(f) We show that $u\in H^{1}_{loc}((0,T];H)$ with $\sqrt{\cdot }\dot{u}\in \He$, and there is a
subsequence $(u_{n_{k}})_{k\ge 1}$ of $(u_{n})_{n\ge 1}$ such that
$\dot{u}_{n_{k}} \rightharpoonup \dot{u}$ weakly in $L^{2}_{loc}((0,T];H)$.
We first add $\omega u_{n}$ on both sides of
 \begin{displaymath}
   \dot{u}_{n}(t)+Au_{n}(t)\ni f(t), 
 \end{displaymath}
and then multiply the resulting inclusion by $t\cdot\dot{u}_{n}(t)$. Then
by Lemma~\ref{lem:A1},
 \begin{displaymath}
   \norm{\sqrt{t}\dot{u}_{n}(t)}_{H}^{2}+t\,\tfrac{\td}{\dt}\varphi_{\omega}(u_{n}(t))= t
   (f(t)+\omega u_{n}(t),\dot{u}_{n}(t))_{H}
 \end{displaymath}
 for a.e. $t\in (0,T)$. Applying Cauchy-Schwarz's and Young's
 inequality on the right hand side of this equation, and subsequently integrating
 over $(0,t)$ for $t\in (0,T]$ gives
 \begin{align*} 
   &\tfrac{1}{2}\int_{0}^{t}\norm{\sqrt{s}\dot{u}_{n}(s)}_{H}^{2}\ds+t\varphi_{\omega}(u_{n}(t))
   +\int_{0}^{t}\tfrac{\omega}{2}\norm{u_{n}(s)}_{H}^{2}\,\ds\\
   &\hspace{2cm}\le \int_{0}^{t}\varphi_{\omega}(u_{n}(s))\,\ds+
   \tfrac{1}{2}\int_{0}^{t}s\norm{f(s)}_{H}^{2}\ds+t\tfrac{\omega}{2}\norm{u_{n}(t)}_{H}^{2}.
 \end{align*}
Further, by~\eqref{eq:20} applied to $T=t$, one has
\begin{align*}
  &\tfrac{1}{2}\int_{0}^{t}\norm{\sqrt{s}\dot{u}_{n}(s)}_{H}^{2}\ds+t\varphi_{\omega}(u_{n}(t))
   +\int_{0}^{t}\tfrac{\omega}{2}\norm{u_{n}(s)}_{H}^{2}\,\ds\\
   &\hspace{2cm}\le \tfrac{1}{2}\norm{f}^{2}_{L^{2}(0,t;H))}
     +\tfrac{1+2\omega}{2}\norm{u_{n}}^{2}_{L^{2}(0,t;H)}
     -\tfrac{1}{2}\norm{u_{n}(t)}^{2}_{H}+\tfrac{1}{2}\norm{u_{n}^{(0)}}^{2}_{H}\\
      &\hspace{4cm} 
        +\tfrac{1}{2}\norm{\sqrt{\cdot}f}_{L^{2}(0,t;H)}^{2}+t\tfrac{\omega}{2}\norm{u_{n}(t)}_{H}^{2}.
\end{align*}
 Recall that 
 $u_{n}\to u$ in $C([0,T];H)$. Thus, $(\sqrt{\cdot}\dot{u}_{n})_{n\ge 1}$ is bounded
 in $\He$ and so by the reflexivity of $\He$, one has
 that $u\in H^{1}_{loc}((0,T];H)$ with $\sqrt{\cdot}\dot{u}\in \He$.
 In particular, $(\dot{u}_{n})_{n\ge 1}$ is bounded in
 $L^{2}(\delta,T;H)$ for every $\delta\in (0,T]$. Thus, a diagonal
 sequence arguments shows that there is a subsequence
 $(u_{n_{k}})_{k\ge 1}$ of $(u_{n})_{n\ge 1}$ such that
 $\dot{u}_{n_{k}} \rightharpoonup \dot{u}$ weakly in
 $L^{2}_{loc}((0,T];H)$.\medskip

(g) Next, we show that $u$ is a solutions
of~\eqref{eq:5} and $\varphi\circ u\in W^{1,1}_{loc}((0,T])$. To see
that $u$ is a solution of~\eqref{eq:5} recall that $u_{n}^{(0)}\to
u_{0}$ in $H$ and the solutions $u_{n}$ of~\eqref{eq:17} converge to
$u$ in $C([0,T];H)$. Thus, $u(0)=u_{0}$ and since for every $\delta\in
(0,T]$, $\dot{u}_{n}\rightharpoonup \dot{u}$ weakly in
$L^{2}(\delta,T;H)$, it follows by the same argument as in part~(c)
from the maximal monotonicity of the
operator $\omega I_{\mathcal{H}}+\mathcal{A}_{\delta}$ in
$L^{2}(\delta,T;H)$ with
\begin{displaymath}
  \mathcal{A}_{\delta}=\Big\{(u,v)\in L^{2}(\delta,T;H)\times L^{2}(\delta,T;H)\,\Big\vert\,v(t)\in
  Au(t)\text{ for a.e. }t\in (\delta,T)\Big\}.
\end{displaymath}
that $u(t)\in D(A)$ for a.e. $t\in (0,T)$ and
$f(t)-\dot{u}(t)\in A(u(t))$.  Moreover, since now,
$g(t):=f(t)-\dot{u}(t)+\omega u(t)\in \partial\varphi_{\omega}(u(t))$
for a.e. $t\in (0,T)$ and $g\in L^{2}(\delta,T;H)$ for every
$\delta\in (0,T]$, it follows from Lemma~\ref{lem:A1} that
$\varphi\circ u\in W^{1,1}_{loc}((0,T])$. This completes the proof of
Br\'ezis' $L^{2}$-maximal regularity result for semiconvex
$\varphi$.\medskip

(h) Finally, we show that $u$ satisfies~\eqref{eq:10}
and~\eqref{eq:11}. Since $u$ is a solution of~\eqref{eq:5}, we can add
$\omega u$ on both side of 
\begin{displaymath}
   \dot{u}(t)+Au(t)\ni f(t), 
 \end{displaymath}
and then multiply the resulting inclusion by
$t\cdot\dot{u}(t)$. Recall, $\sqrt{\cdot}\dot{u}\in
\He$. Thus by Lemma~\ref{lem:A1},
 \begin{displaymath}
   \norm{\sqrt{t}\dot{u}(t)}_{H}^{2}+t\,\tfrac{\td}{\dt}\varphi_{\omega}(u(t))= t
   (f(t)+\omega u(t),\dot{u}(t))_{H}
 \end{displaymath}
 for a.e. $t\in (0,T)$. Next, by Cauchy-Schwarz's and Young's
 inequality, and subsequently integrating
 over $(0,t)$ for $t\in (0,T]$ gives
 \begin{align*} 
   &\tfrac{1}{2}\int_{0}^{t}\norm{\sqrt{s}\dot{u}(s)}_{H}^{2}\ds+t\varphi_{\omega}(u(t))
   +\int_{0}^{t}\tfrac{\omega}{2}\norm{u(s)}_{H}^{2}\,\ds\\
   &\hspace{2cm}\le \int_{0}^{t}\varphi_{\omega}(u(s))\,\ds+
   \tfrac{1}{2}\int_{0}^{t}s\norm{f(s)}_{H}^{2}\ds+t\tfrac{\omega}{2}\norm{u(t)}_{H}^{2},
 \end{align*}
from which we can conclude~\eqref{eq:10}
and~\eqref{eq:11}.
\end{proof}

%
%


\end{document}